\documentclass{article}
\usepackage[utf8]{inputenc}
\usepackage{graphicx} 
\usepackage{hyperref}
\usepackage{amsmath}
\usepackage[utf8]{inputenc}
\usepackage[english]{babel}
\usepackage{xcolor}
\usepackage[margin=1.1in]{geometry}
\usepackage{enumerate}
\usepackage{amssymb}
\usepackage{amsthm}
\usepackage{mathrsfs}
\usepackage{lipsum} 
\usepackage{mathtools}
\usepackage{float}
\usepackage{braket}
\usepackage{cancel}
\usepackage{eufrak}
\usepackage{comment}
\usepackage{algorithm}
\usepackage{algorithmic}
\usepackage{float}
\usepackage{textcomp}

\newtheorem{prop}{Proposition}[section]
\newtheorem{theorem}{Theorem}[section]

\newtheorem{lemma}[theorem]{Lemma}
\newtheorem{definition}{Definition}[section]
\usepackage[normalem]{ulem}
\usepackage{titlesec}
\usepackage{natbib}
\usepackage{cleveref}
\usepackage{wrapfig}

\titleformat{\paragraph}
{\normalfont\normalsize\bfseries}{\theparagraph}{1em}{}
\titlespacing*{\paragraph}
{0pt}{3.25ex plus 1ex minus .2ex}{1.5ex plus .2ex}

\begin{document}

\title{More Vertices of the Tristochastic Polytope}

\author{{Nati Linial\thanks{School of Computer Science and Engineering, Hebrew University, Jerusalem 91904, Israel. e-mail: nati@cs.huji.ac.il.{~Supported in part by an ERC Grant 101141253, "Packing in Discrete Domains - Geometry and Analysis".}}}\and{Zur Luria\thanks{School of Software Engineering and Computer Science, Azrieli College of Engineering, Jerusalem, Israel e-mail: zurlu@jce.ac.il}}\and Maya Trakhtman\thanks{M.sc. student at the School of Computer Science and Engineering, Hebrew University, Jerusalem 91904, Israel. e-mail: maya.trakhtman@mail.huji.ac.il.}}
\date{}
\maketitle
\begin{abstract}
The $n\times n$ doubly 
stochastic matrices constitute a polytope in $\mathbb{R}^{n^2}$, and by
Birkhoff's theorem, its vertex set coincides
with the set of order-$n$ permutation matrices.\\
A tristochastic array is an $n \times n\times n$ array
of nonnegative reals, where each row, column, and shaft sums to one.
These arrays constitute a polytope $\Delta_n$ in $\mathbb{R}^{n^3}$. 
In analogy, it is easy to see that each of the $L_n$
order-$n$ Latin squares is a vertex of $\Delta_n$,
but in contrast to Birkhoff's theorem, 
Latin squares form a vanishingly small subset of $\Delta_n$'s vertex set.
We show here that $\Delta_n$ has at least
$L_n^{2-o(1)}$ vertices. 
\end{abstract}

\section{Introduction}

A {\em doubly stochastic} matrix is a real  
$n \times n$ matrix with nonnegative entries,
in which every row and column sums to one. The set of order-$n$ doubly 
stochastic matrices forms a polytope 
$\Omega_n\subset\mathbb{R}^{n^2}$, and every order-$n$ 
permutation matrix is clearly a vertex of $\Omega_n$. A classical theorem of
Birkhoff \cite{birkhoff1946tres} states that
these are in fact $\Omega_n$'s only vertices.

Moving up in dimensions, 
an $n \times n\times n$ array is
\textit{tristochastic} if its entries are nonnegative reals, 
and each row, column, and shaft sums to one.
The set of order-$n$ tristochastic arrays forms a polytope 
$\Delta_n \subset \mathbb{R}^{n^3}$. 
It is easy to see that a tristochastic array of only $0,1$ 
entries\footnote{The view of Latin squares as two-dimensional permutations
emerged in a body of literature that is
called {\it high-dimensional combinatorics}, e.g., \cite{luria_count_hd_prmt}.}
is synonymous with an order-$n$ {\em Latin square}, and that every Latin square 
is a vertex of $\Delta_n$. However, in contrast to Birkhoff's theorem, 
Latin squares form a vanishingly small subset of $\Delta_n$'s vertex set.
It is known that the number of order-$n$ Latin squares is 
$((1+o(1))\frac{n}{e^2})^{n^2}$ (see \cite{van_lint_wilson}).  
In contrast, as shown in \cite{linial2014vertices},
$\Delta_n$ has at least $n^{(\frac{3}{2} - o(1)) n^2}$ vertices. 

In this paper we improve this lower bound, and show that $\Delta_n$ has at least
$n^{(2 - o(1)) n^2}$ vertices.

\subsection{Definitions and notations} 

Let $M$ be a $3$-dimensional array with real entries.
\begin{itemize}
\item The set of entries of the form $M(\cdot,j,k)$, $M(i, \cdot, k)$, and 
$M(i,j,\cdot)$ are called the $(j,k)$-{\em column}, 
the $(i,k)$-{\em row}, and the $(i,j)$-{\em shaft}, of $M$ respectively. 
\item A {\em line} in $M$ refers to either a column, a row or a shaft.
\item The $t$-th {\em $x$-wall} (resp.\  the $t$-th {\em $y$-wall}) of
$M$ is the set of its entries of the form $M(t,\cdot,\cdot)$, 
(resp.\ $M(\cdot,t,\cdot)$). 
\item The $t$-th {\em $z$-wall} is also called the $t$-th {\em layer}. It is the set of the 
entries of the form $M(\cdot,\cdot,t)$, denoted by $M^{(t)}$. 
\item The $(a,b)$-th {\em slice} of $M$, denoted by $M^{(a,b)}$ is the union of layers $a,\ldots,b$ of $M$. 
\item Associated with every slice $M^{(1,k)}$ is a matrix $D^k=D(M^{(1,k)})$, whose entries are the shaft sums up to and including layer $k$. We think of $D$ as providing a view of $M^{(1,k)}$ from above. 
\end{itemize}
The support $\text{supp}(R)$ of an array $R$ is the set of its non-zero cells.

\begin{definition}
Let $\cal H= \cal H\rm_n$ be the set of all tristochastic arrays in 
$\Delta_n$ all of whose entries are either $0$ or $\frac12$. In other words,
every line has two $\frac12$-entries and the rest are zeros. 
\end{definition}

Throughout this paper, we construct arrays $H\in \cal H$ layer by layer, from the bottom up. In each layer we place $\frac{1}{2}$'s in selected cells of the evolving array.
Consequently, shaft sums of the evolving array $H$, and equivalently the entries in $D(H)$, take the values $0, \frac{1}{2}$ or $1$. 
We refer to these as {\em$0$-cells}, {\em$\frac{1}{2}$-cells}, and {\em$1$-cells}
of $D$ respectively.

We associate several graphs with the evolving $H$.  
\begin{definition}
Let $R$ be a $3D$ array,
every line of which has support size
at most $2$. The associated graph $U_R$ has  $\text{supp}(R)$ as its vertex set.
Adjacency means that the corresponding cells reside on the same line in $R$.
When $R$ comprises the lower $\ell$
layers in the evolving $H$ we denote it by $U_{\ell}$ rather than $U_R$.
\end{definition}

\begin{definition}
Associated with $H$ are also
the bipartite graphs $G$
$=\langle X\cup Y, E\rangle$ and 
$\overline{G}=\langle X\cup Y, \overline{E}\rangle$. Here
$X$ and $Y$ are the sets of $D$'s rows resp.\ columns, whereas
$E:=\{(x,y)\in X\times Y~|~D(x,y)=0\}$, and 
$\overline{E}:=\{(x,y)\in X\times Y~|~D(x,y)=\frac{1}{2}\}$.
As above, $D$ is the matrix
where $D(i,j)$ equals the sum of entries in the $(i,j)$-shaft
in the evolving array $H$.
\end{definition}

Lastly, recall the following standard facts.
\begin{prop}\label[proposition]{prop:bi-colored connected graph}
\begin{itemize}
\item[] 
\item
A graph is bipartite iff it contains no odd cycle.
\item 
A graph is connected and bipartite iff it admits a unique $2$-coloring (up to switching colors).
\end{itemize}
\end{prop}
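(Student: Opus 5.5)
We prove the two items in turn. Throughout, a $2$-coloring means a map $c:V\to\{0,1\}$ such that adjacent vertices get different colors. Being bipartite is the same as admitting such a coloring.

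\textbf{First item.} One direction is immediate. If $G$ is bipartite with coloring $c$, then colors alternate along any cycle $v_0v_1\cdots v_{k-1}v_0$. Returning to $v_0$ therefore forces $k$ to be even, so there is no odd cycle.

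For the converse, assume $G$ has no odd cycle and handle each connected component separately. In a component, fix a root $r$ and set $c(v)=d(r,v)\bmod 2$, where $d$ is graph distance. Suppose some edge $uv$ had $c(u)=c(v)$.
\begin{itemize}
\item Take shortest paths $P_u$ from $r$ to $u$ and $P_v$ from $r$ to $v$, and let $w$ be the last vertex they share.
\item The subpaths of $P_u$ and $P_v$ from $w$ are internally disjoint, and since both are shortest paths they have lengths $d(r,u)-d(r,w)$ and $d(r,v)-d(r,w)$.
\item These two subpaths together with the edge $uv$ form a closed cycle of length $d(r,u)+d(r,v)-2d(r,w)+1$, which is odd because $d(r,u)\equiv d(r,v)\pmod 2$.
\end{itemize}
This contradicts the assumption, so $c$ is a proper $2$-coloring. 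The only delicate point is making sure the closed walk is a genuine cycle, and choosing $w$ as the last common vertex handles that. Alternatively, one can avoid this by noting that any odd closed walk contains an odd cycle, by induction on length: split the walk at a repeated vertex, and one of the two pieces is an odd closed walk that is shorter.

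\textbf{Second item.} For the forward direction, let $G$ be connected and bipartite, and let $c$ be a proper $2$-coloring. For any vertex $v$, pick a path from a fixed vertex $r$ to $v$. Colors alternate along this path, so $c(v)=c(r)+d(r,v)\bmod 2$. Hence $c$ is determined entirely by the single bit $c(r)$, which gives exactly two colorings, differing by a swap of colors.

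For the backward direction, suppose $G$ has a $2$-coloring that is unique up to swapping. Then $G$ is bipartite by definition. If $G$ were disconnected, choose one component and flip the colors only there. This produces a proper coloring that is neither the original nor its global swap, contradicting uniqueness. Hence $G$ is connected.

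One edge case needs a convention. With no vertices, or with a single vertex, "unique up to switching" holds trivially, and the statement is consistent with these cases. With the convention that a graph is connected when it is nonempty and all vertices are mutually reachable, the argument above goes through.
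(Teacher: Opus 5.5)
Your proof is correct. The paper gives no proof of this proposition: it is only recalled as a standard fact and used as a black box in Lemma~\ref{lemma:who_is_vertex} and in Stage~3. So there is nothing to compare against. What you wrote is the textbook argument: parity of distances from a root for the first item, and propagating the color along paths plus flipping a single component for the second.

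Two small points are worth tidying.

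First, in the distance-parity argument, taking $w$ as the last common vertex makes the two subpaths meet only at $w$. You should also rule out $w\in\{u,v\}$, so that the closed walk genuinely has length at least $3$. Since $uv$ is an edge, $|d(r,u)-d(r,v)|\le 1$, and equal parity then forces $d(r,u)=d(r,v)$. If $w=u$, then $u$ lies on $P_v$ at position $d(r,u)=d(r,v)$, i.e.\ $u=v$, which is impossible. Your alternative via odd closed walks avoids this issue entirely.

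Second, your edge-case remark is backwards. Suppose ``connected'' requires a nonempty vertex set. Then the empty graph has exactly one (empty) $2$-coloring, so it is unique up to switching, yet the graph is not connected, and the reverse implication of the second item fails. The statement needs either $V\neq\emptyset$ as a standing assumption or the convention that the empty graph is connected. Similarly, ``exactly two colorings'' in the forward direction requires $V\neq\emptyset$. None of this affects the paper, where the graphs $U_H$ are always nonempty.
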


\section{The construction}\label{section:2-the-construction}
Our goal is to construct many arrays $H\in\cal H$, for which $U_H$ 
is connected and not bipartite. As \cref{lemma:who_is_vertex} shows, such $H$
is a vertex of $\Delta_n$.
We construct $H$ layer by layer, and
follow the evolution of the graph $U_H$ as $H$ is being constructed. 

Here is the condition under which $H\in \cal H$ is a vertex of $\Delta_n$. 

\begin{lemma}\label[lemma]{lemma:who_is_vertex}
An array $H\in\cal H$ is a vertex of $\Delta_n$ iff no connected component of $U_H$ is bipartite. 
\end{lemma}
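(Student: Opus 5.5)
We use the standard characterization of vertices: a point $H$ of the polytope $\Delta_n$ is a vertex iff there is no nonzero array $Z$ with $H\pm\varepsilon Z\in\Delta_n$ for some small $\varepsilon>0$. Since $\Delta_n$ is cut out by the line-sum equalities and nonnegativity, such a $Z$ must satisfy two conditions. First, $\text{supp}(Z)\subseteq\text{supp}(H)$, because any cell where $H$ vanishes would become negative under one of the two signs. Second, every line sum of $Z$ is zero. Conversely, any nonzero $Z$ meeting both conditions gives such a perturbation, since every entry of $H$ in its support equals $\frac12>0$. So the lemma reduces to a statement about the kernel: $H$ is a vertex iff the only array $Z$ supported on $\text{supp}(H)$ with all line sums zero is $Z=0$.

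Next we translate this kernel condition into the graph $U_H$. Every line of $H$ contains exactly two cells of $\text{supp}(H)$, so the line-sum condition on that line says these two cells carry opposite values of $Z$. These two cells are exactly the endpoints of the edge of $U_H$ corresponding to that line. Lines that meet $\text{supp}(H)$ in no cell impose nothing, and by the definition of $\cal H$ there is no line meeting it in exactly one cell. Therefore the admissible $Z$ are precisely the functions $z$ on $V(U_H)$ with $z(u)=-z(v)$ for every edge $uv$.

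With this, both directions follow by analyzing $z$ on each connected component $C$.

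\begin{itemize}
\item \emph{If some component $C$ is bipartite.} Take its $2$-coloring and set $z=+1$ on one color class, $z=-1$ on the other, and $z=0$ elsewhere. This is a nonzero admissible $Z$, so $H=\frac12\big((H+\varepsilon Z)+(H-\varepsilon Z)\big)$ is not a vertex.
\item \emph{If no component is bipartite.} By the Proposition, each component $C$ contains an odd cycle $v_0v_1\cdots v_{2m}v_0$. Going around it, the sign alternates an odd number of times, which forces $z(v_0)=-z(v_0)$, so $z(v_0)=0$. Propagating along paths within the connected component $C$ gives $|z|$ constant on $C$, hence $z\equiv0$ on $C$. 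Thus $Z=0$ and $H$ is a vertex.
\end{itemize}

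The only point needing care is the first reduction. We must check that a perturbation cannot touch cells outside $\text{supp}(H)$, and that lines missing the support contribute no constraint. Both follow from nonnegativity and from every line of $H$ having exactly two $\frac12$-entries. The rest is routine.
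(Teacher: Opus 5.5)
Your proof is correct and is essentially the paper's argument, rephrased through the perturbation direction. The paper notes that the entries of $A$ alternate between $a$ and $1-a$ along edges of $U_H$; this is your alternation $z\leftrightarrow -z$ applied to $A-H=(1-\alpha)(A-B)$, and the paper's construction of $A,B$ from a bipartite component is exactly $H\pm\frac12 Z$ with your $\pm1$ labeling. The only difference is in one direction: you prove ``no bipartite component $\Rightarrow$ vertex'' by an odd cycle forcing $z=0$, whereas the paper proves the contrapositive, showing that the values of $A$ properly 2-color the component of a cell where $A\neq B$.
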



\begin{proof}
For $H\in\cal H$ that is not a vertex of $\Delta_n$,
we find a bipartite connected component of $U_H$. Since $H$
is not a vertex of $\Delta_n$, it is a convex combination of two members
$A\neq B\in \Delta_n$, viz., $H=\alpha \cdot A + (1-\alpha)\cdot B$, where $0<\alpha<1$. 
Consequently, $\text{supp}(H)=\text{supp}(A)\cup\text{supp}(B)$. 
Let's consider a cell where $A\ne B$ differ, 
i.e., $A(i_0,j_0,k_0)=a$, $B(i_0,j_0,k_0)=b$, with $a\ne b$.
Since $\alpha \cdot a + (1-\alpha)\cdot b = H(i_0,j_0,k_0)= \frac{1}{2}$,
the number $\frac{1}{2}$ is a strict convex combination of the
numbers $a$ and $b$, and in particular, $a, b\neq \frac{1}{2}$. 

Let $u_0\in V(U_H)$ correspond to the cell $H(i_0,j_0,k_0)$, and let
$u_1$ be a neighbor of $u_0$ in $U_H$, that corresponds to some cell $H(i_1,j_1,k_1)$. 
Since $A(i_0,j_0,k_0)=a$ and $\text{supp}(A)\subseteq\text{supp}(H)$, the definition of 
$\Delta_n$ implies that $A(i_1,j_1,k_1)=1-a$. Likewise, any neighbor of $u_1$ in $U_H$
corresponds to a cell that takes the value $a$ in $A$, and so on.
Note that $a\ne 1-a$ (as $a\ne \frac{1}{2}$). Consequently, the connected component of 
$u_0$ in $U_H$ is (uniquely) $2$-colored. Namely, the vertices whose corresponding cells in $A$
equal $a$ vs.\ those whose corresponding cell equals $1-a$. 
By \cref{prop:bi-colored connected graph} the connected component is bipartite.
 


Now assume that $U_H$ contains a bipartite connected component, with vertex bi-partition $(T,R)$. We construct two corresponding arrays $A$ and $B$ as follows:
\begin{itemize}
\item 
If the cell $(\alpha,\beta,\gamma)$
is not in $T{\cup} R$, then $A(\alpha,\beta,\gamma)=B(\alpha,\beta,\gamma)=H(\alpha,\beta,\gamma)$.
\item
If $(\alpha,\beta,\gamma)\in T$, then $A(\alpha,\beta,\gamma)=1$,
and $B(\alpha,\beta,\gamma)=0$.
\item
If $(\alpha,\beta,\gamma)\in R$, then 
$A(\alpha,\beta,\gamma)=0$, and $B(\alpha,\beta,\gamma)=1$.
\end{itemize}
We claim that the arrays $A$ and $B$ belong to $\Delta_n$. In
every line that meets $T\cup R$, both $A$ and $B$ have a unique $1$ entry.
All the other lines sum up to $1$ because $H\in \Delta_n$. 
Indeed, $A\ne B$ and $H=\frac{1}{2} \cdot A + \frac{1}{2}\cdot B$.
\end{proof}

We now turn to the construction. We first outline our work plan, and then proceed with the detailed step-by-step description.

\subsection{Work plan}
Our construction proceeds 
layer by layer from the bottom up and comprises five stages.
In the first stage we construct 
the lowest $k\approx \frac{n}{10}$ layers of the array. 
Subsequent layers are constructed with an eye to this initial stage.

As usual, a Hamiltonian cycle in a graph is a cycle that visits each vertex exactly once.
By a slight abuse of the language, we also refer to the adjacency matrix of a Hamiltonian cycle in $G$ or in $\overline{G}$ as a {\em Hamiltonian cycle}. 
Similarly, we refer to the adjacency matrix of a cycle in $G$ or in $\overline{G}$ as a cycle.

Additionally, a \hypertarget{standardcycle}{{\em standard cycle}} is a $t\times t$ matrix $Z$ where $Z(\alpha,\alpha)=Z(\beta,\beta+1)=Z(t,1)=\frac 12$ for all $\alpha=1,\ldots, t$ and $\beta=1,\ldots, t-1$, and zeros elsewhere.

We turn to the construction plan (see \autoref{fig:overview_construction}):
\begin{enumerate}
\item 
Here is what we do in slice $(1,k)$. Let $P$ be the
north-west $\big\lfloor \frac{n}{2} \big\rfloor\times \big\lfloor \frac{n}{2} \big\rfloor\times k$ sub-array of $H$. 
The south-east $\big\lceil \frac{n}{2} \big\rceil\times \big\lceil \frac{n}{2} \big\rceil\times k$ sub-array is called $Q$, see \autoref{fig:quarters}. In
each of the first $k$ layers, we place one
Hamiltonian cycle in $P$ and one in $Q$. 
These $2k$ cycles are chosen such that they are all disjoint, and each meets the main diagonal of $D(H)$, i.e., the shafts $H(t,t,\cdot)$.  
Consequently, the support of every shaft in the $(1,k)$-slice is either empty or a singleton.
In other words, every cell in $D^k(H)=D(H^{(1,k)})$ is either $0$ or $\frac{1}{2}$.

We describe next what we do in $P$, with $Q$ being treated essentially identically.
The submatrix of $D$ that corresponds to $H$'s north-west part $P$ is called $D(P)$. 
We associate with $P^{(1,i)}$ a bipartite graph $G^i = G^i_P = \langle X\cup Y, E^i\rangle$, 
where $|X|=|Y|=\big\lfloor \frac{n}{2} \big\rfloor$, 
and $E^i:=\{(x,y)\in X\times Y~|~D(x,y)=0\}$.

For every $i=1,\ldots,k$ we pick\footnote{We also refer to submatrices as minors.} 
a submatrix $C_i$ of size $c_i\times c_i$
along the main diagonal (\autoref{fig:quarterP}) 
and place a standard cycle there (\autoref{fig:standard cycle}). 
Each of the integers $c_i$ equals either $4$ or $5$ (more on this below)
and the $C_i$ minors induce a partition of $P$'s main diagonal. 
Additionally, we select a Hamiltonian cycle in the graph $G^{i}$
on the vertices not corresponding to the cells in $C_i$ (\autoref{fig:stage-1-ham-cycle}).
Subsequently, we merge the two parts into a single cycle via a switching step (\autoref{fig:stage-1-mix}).

A crucial ingredient in our proof is a result of Cuckler and Kahn \cite{cuckler2009hamiltonian}, which implies that this process yields a very large number of Hamilton cycles from which to choose. \label{work_plan_stage_1}
\item
Layer $(k+1)$ "glues" the disjoint cycles from lower layers. It
consists of a single cycle $L_{k+1}$ that includes 
all cells of the main diagonal of 
layer $H^{k+1}$. Consequently, the graph $U_{k+1}$ is connected. This is because
each cycle of lower layers has a $U_{k+1}$-edge to $L_{k+1}$ 
along the main diagonal of $D^{k+1}(H)$.

To guarantee that for $\ell > k+1$, the graph $U_{\ell}$ is connected, 
it suffices that
every connected component of $U_{\ell}$ at layer $\ell$
has at least one $U_\ell$-neighbor in a lower layer.
It follows that the resulting graph $U_H$ is connected, so we can
apply \cref{lemma:who_is_vertex} to the final construction.
\item
Layer $(k+2)$ is constructed such that the corresponding graph $U_{k+2}$ is non-bipartite, by introducing an odd cycle (\cref{prop:bi-colored connected graph}).
This layer comprises one permutation matrix 
supported on $D$'s $0$-cells and one on its $\frac 12$-cells.
Given the connectivity of the graph $U_H$ of the final construction, this stage ensures it is also non-bipartite, thus satisfying the conditions of \cref{lemma:who_is_vertex}, and 
implying that $H$ is a vertex.
\item
Each layer $i=k+3,\ldots, n-k$ comprises two perfect matchings, one in $G$ and one in $\overline{G}$.
Notice that the matrix $D^{n-k}$ has no $0$-cells.
This leads us to the final stage.\label{work_plan_stage_4}
\item
In each layer $i=n-k+1, \ldots n$ we use two perfect matchings
in $\overline{G}$. This stage continues until we complete the construction.
\label{work_plan_stage_5} 
\end{enumerate}

\begin{figure}[h!]
    \centering
    \includegraphics[width=0.65\linewidth]{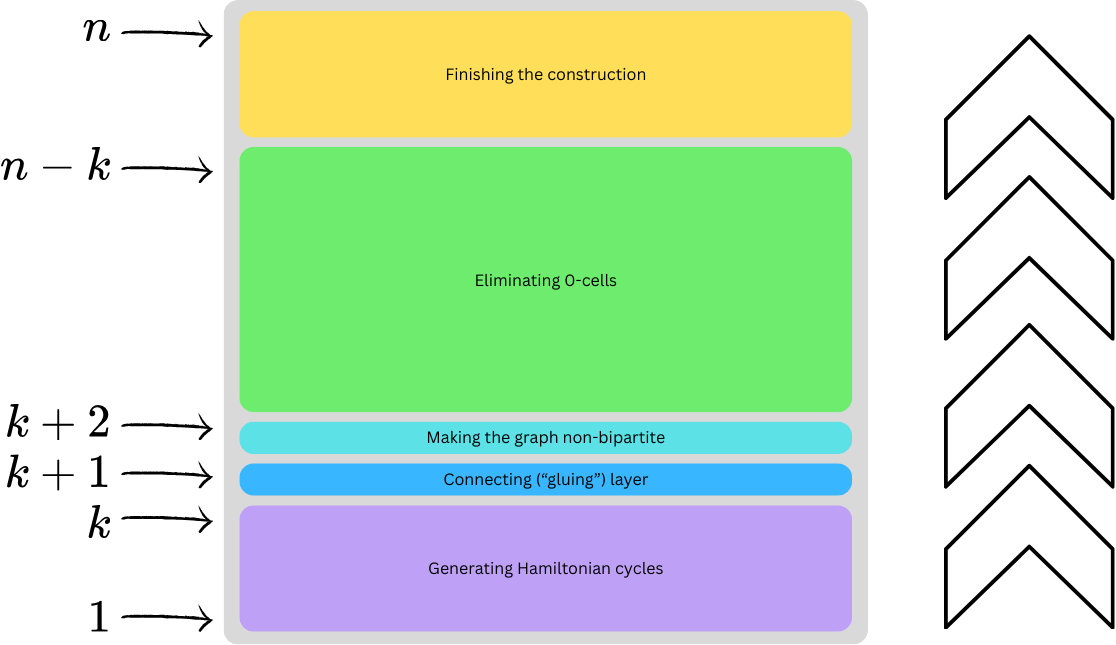}
    \caption{The Construction - It has $5$ stages and proceeds from the bottom up. Stage $1$ - generating Hamiltonian cycles, stage $2$ - connecting ("gluing") layer, stage $3$ - making $U_{k+2}$ non-bipartite, stage $4$ - eliminating $0$-cells of $D(H)$, stage $5$ - eliminating $\frac{1}{2}$-cells of $D(H)$ and completing the construction.}
    \label{fig:overview_construction}
\end{figure}

\subsection{Stage 1 - Generating Hamiltonian cycles: Layers $1, \dots ,k$}
We begin by splitting the $(1,k)$-slice of $H$ into four parts as follows (see \autoref{fig:quarters}):

\begin{itemize}
    \item $H(1\dots\big\lfloor\frac{n}{2}\big\rfloor, 1\dots\big\lfloor\frac{n}{2}\big\rfloor,1\dots k)$, called $P$.
    \item $H(\big\lfloor\frac{n}{2}\big\rfloor+1\dots n, \big\lfloor\frac{n}{2}\big\rfloor+1\dots n,1\dots k)$, called $Q$.
    \item $H(1\dots\big\lfloor\frac{n}{2}\big\rfloor, \big\lfloor\frac{n}{2}\big\rfloor+1\dots n,1\dots k)$, called $NE$.
    \item $H(\big\lfloor\frac{n}{2}\big\rfloor+1\dots n, 1\dots\big\lfloor\frac{n}{2}\big\rfloor,1\dots k)$, called $SW$.
\end{itemize}
Next, we describe the construction of $P$. We similarly deal with $Q$.
Work on the other two parts of the array $NE$ and $SW$ commences only later on.

\begin{figure}[h!]
    \hspace*{2.95cm}
    \includegraphics[width=0.52\linewidth]{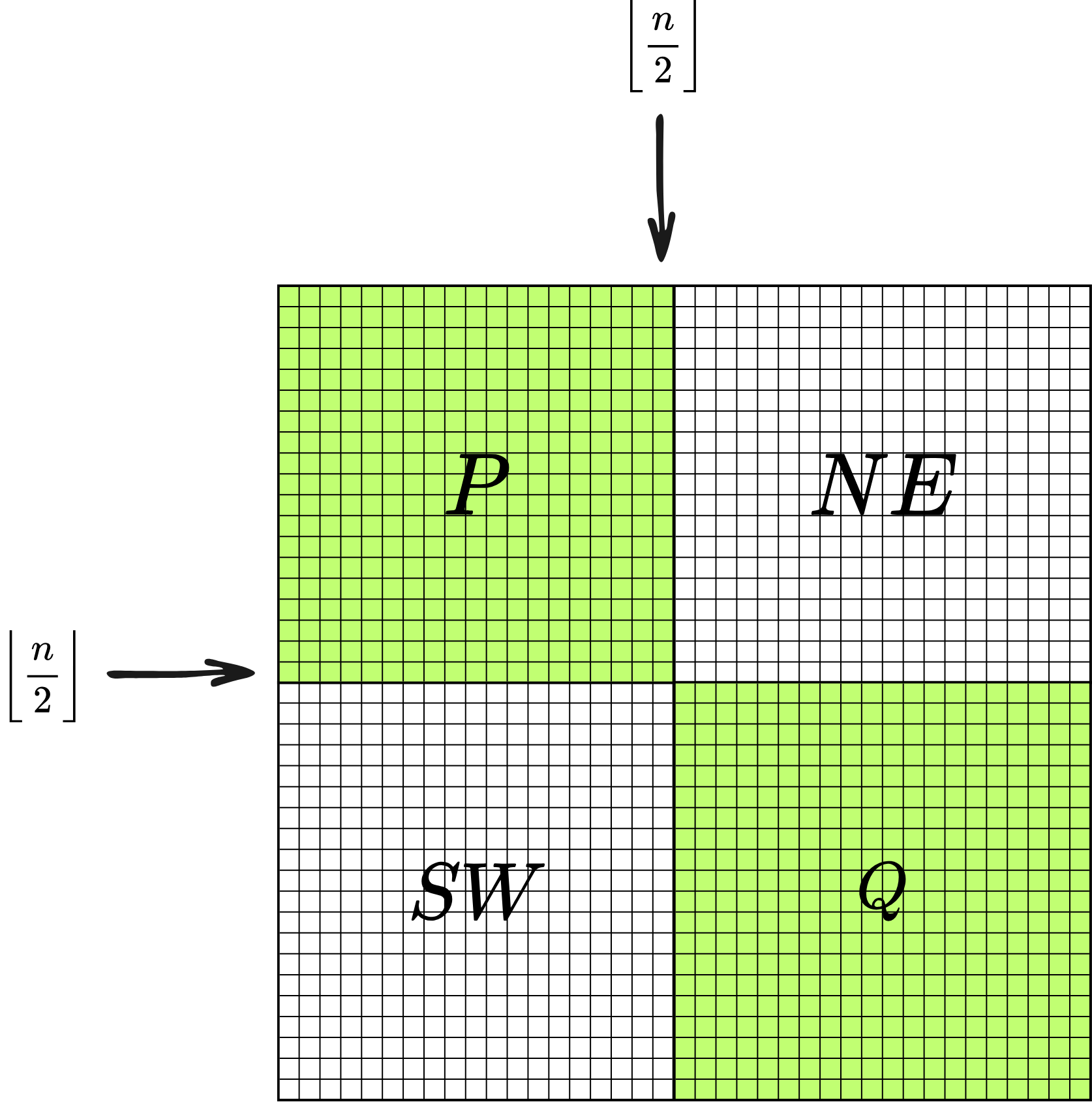}
    \caption{Split $H^{(1,k)}$ into: $P$ - the  north-west $\big\lfloor\frac{n}{2}\big\rfloor\times \big\lfloor\frac{n}{2}\big\rfloor\times k$ subarray,
    $Q$ - the south-east $\big\lceil\frac{n}{2}\big\rceil\times \big\lceil\frac{n}{2}\big\rceil\times k$ subarray, 
    $NE$ - the north-east $\big\lfloor\frac{n}{2}\big\rfloor\times \big\lceil\frac{n}{2}\big\rceil\times k$ subarray,
    $SW$ - the south-west  $\big\lceil\frac{n}{2}\big\rceil\times \big\lfloor\frac{n}{2}\big\rfloor\times k$ subarray.}
    \label{fig:quarters}
\end{figure}

\subsubsection{Construction of $P$}
As mentioned, we construct
the sub-array $P$ layer-by-layer. We express its side length
as $\big\lfloor \frac{n}{2} \big\rfloor=\sum_{i=1}^k c_i$, where 
$4\le c_1 \le c_2 \le \ldots \le c_k \le 5$
(\cref{prop:frob}).
At each layer $1\le i\le k$ we set aside a sub-matrix (aka minor) 
$C_i$ of size $c_i\times c_i$, along the main diagonal.
These minors are disjoint and together cover the main diagonal
of $D=D(P)$.
Namely, for every $t=1,\ldots, \big\lfloor\frac{n}{2}\big\rfloor$,
the $t$-th cell of the diagonal, $D(t,t)$, is in exactly one of these minors.
This is possible, since $\big\lfloor\frac{n}{2}\big\rfloor=\sum_{i=1}^k c_i$,
see \autoref{fig:quarterP}.

At each layer $i=1,\dots,k$,
we place a standard cycle in the chosen $C_i$ minor,
as well as a Hamiltonian cycle, $L_i$, in the rest of the layer's cells.
Then we combine these two parts into a single Hamiltonian cycle $L'_i$ via a switching step.
The selection of $L_i$ for $i\leq k$ relies 
on work by Cuckler and Kahn \cite{cuckler2009hamiltonian}. We first note:

\begin{prop}\label[proposition]{prop:frob}
Every integer $N\ge 12$ is expressible as $N=5a+4b$,
where $a,b\ge 0$ are integers, and $b\le 4$.
\end{prop}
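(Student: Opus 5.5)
The plan is to choose $b$ from the residue of $N$ modulo $5$ and then check that the corresponding $a$ is a nonnegative integer. We need $N-4b\equiv 0 \pmod 5$, i.e.\ $4b\equiv N \pmod 5$. Since $4\equiv -1 \pmod 5$, this is equivalent to $b\equiv -N \pmod 5$.

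So we take $b$ to be the unique element of $\{0,1,2,3,4\}$ with $b\equiv -N\pmod 5$. This immediately gives $b\le 4$. We then set $a=(N-4b)/5$, which is an integer by the choice of $b$.

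The only thing left to check is that $a\ge 0$, i.e.\ $N\ge 4b$. Since $b\le 4$, we have $4b\le 16$, so for $N\ge 16$ this holds trivially. The remaining cases $N=12,13,14,15$ we check directly:
\[
12=4\cdot 3,\qquad 13=5+4\cdot 2,\qquad 14=2\cdot 5+4,\qquad 15=3\cdot 5.
\]
In each of these the value of $b$ agrees with the residue rule. Alternatively, one can note that $4b>N$ would force $b=4$ and $N\le 15$, and $b=4$ means $N\equiv 1\pmod 5$, i.e.\ $N=11$, which is excluded by $N\ge 12$.

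There is no real obstacle here; the only point needing care is this small boundary check near $N=12$, which is exactly why the threshold $12$ appears (indeed $11$ has no such representation).

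For the application in the paper, with $N=\lfloor n/2\rfloor$ written as $N=5a+4b$, one takes $k=a+b$ minors. The $b$ minors of size $4$ come first and the $a$ minors of size $5$ come after, which gives $4\le c_1\le\cdots\le c_k\le 5$ and $\sum_i c_i=N$. Since $b\le 4$, we get $k\approx N/5\approx n/10$, as the work plan requires.
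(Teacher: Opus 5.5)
Your proof is correct and is essentially the paper's argument. The paper checks $N=12,\dots,16$ and then inducts in steps of $5$, which keeps $b$ fixed and increments $a$; your explicit choice $b\equiv -N \pmod 5$ with $a=(N-4b)/5$ is the unrolled form of that induction, with the same small boundary check.
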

\begin{proof}
This is easily verified
for $N=12,13,14,15, 16$. For larger $N$, argue by induction in steps of $5$.
\end{proof}

We deal with $Q$ likewise, with minor sizes 
$4\le c_1' \le c_2' \le \ldots \le c_k' \le 5$.
If $n$ is even, then $c_i'=c_i$
for all $i$. If $n$ is odd, $Q$'s side length exceeds that of $P$
by $1$, and we let $c_1':=c_1+1$ and $c_i'=c_i$ for all $i\ge 2$. 
A slight modification is needed when $5|\big\lfloor \frac{n}{2} \big\rfloor$ and $n\geq 40$. In this 
case, we let $c_1=\ldots=c_5=4,\ c_6=\ldots=c_k=5$ and 
$c_1'=\ldots=c_4'=4,\ c_5'=\ldots=c_k'=5$. 
\begin{figure}[h!]
    \hspace*{2.8cm}
    \includegraphics[width=0.53\textwidth]{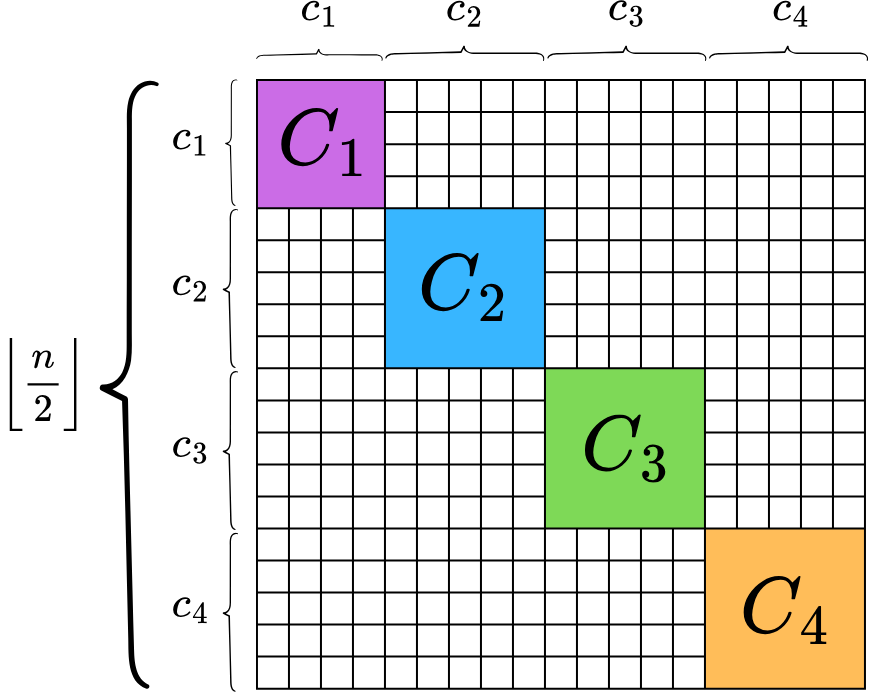}
    \caption{Reserved places for the diagonal minors $C_i$ of size $c_i\times c_i$ for layers $i=1,\dots,k$ in $D(P)$.}
    \label{fig:quarterP}
\end{figure}

Next, we describe the construction of each layer in $P$.

\paragraph{Construction of layer $i$ in $P$}

The construction of layer $i$ in the sub-array $P$ is done in three steps:

\begin{itemize}
    \item[1.] Place a \hyperlink{standardcycle}{standard cycle} in minor $C_i$ (see \autoref{fig:standard cycle}).
    
    \begin{figure}[ht]
    \hspace*{4.2cm}
    \includegraphics[width=0.44\textwidth]{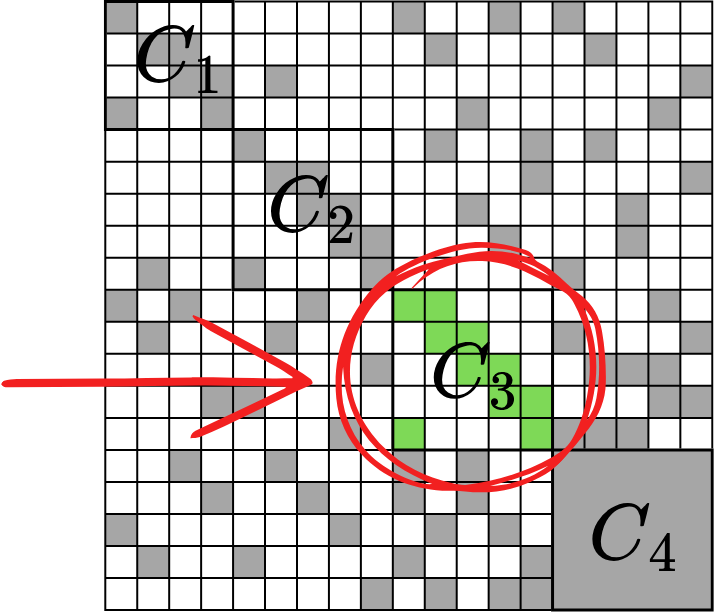}
    \caption{To start the construction of layer $3$ (step $1$), we place a standard cycle within minor $C_3$, highlighted in green. The Hamiltonian cycles from the previous layers, as well as the minors preserved for the subsequent layers, are colored in gray.}
    \label{fig:standard cycle}
    \end{figure}
    \item[2.] \label[2]{section-2-stage-1}
    Now, let $S^i$ be the $\left(\big\lfloor \frac{n}{2} \big\rfloor-c_i\right) \times \left(\big\lfloor \frac{n}{2} \big\rfloor-c_i\right) \times k$ sub-array of $P$, defined by removing the $x$-walls and $y$-walls that intersect with $C_i$ (see \autoref{fig:def S}).
    During this stage, we refer to $D^i$ as $D(S^i)$.  
    \begin{figure}[h!]
    \centering
    \includegraphics[trim=0cm 0cm 0cm 2cm, clip,width=0.52\textwidth]{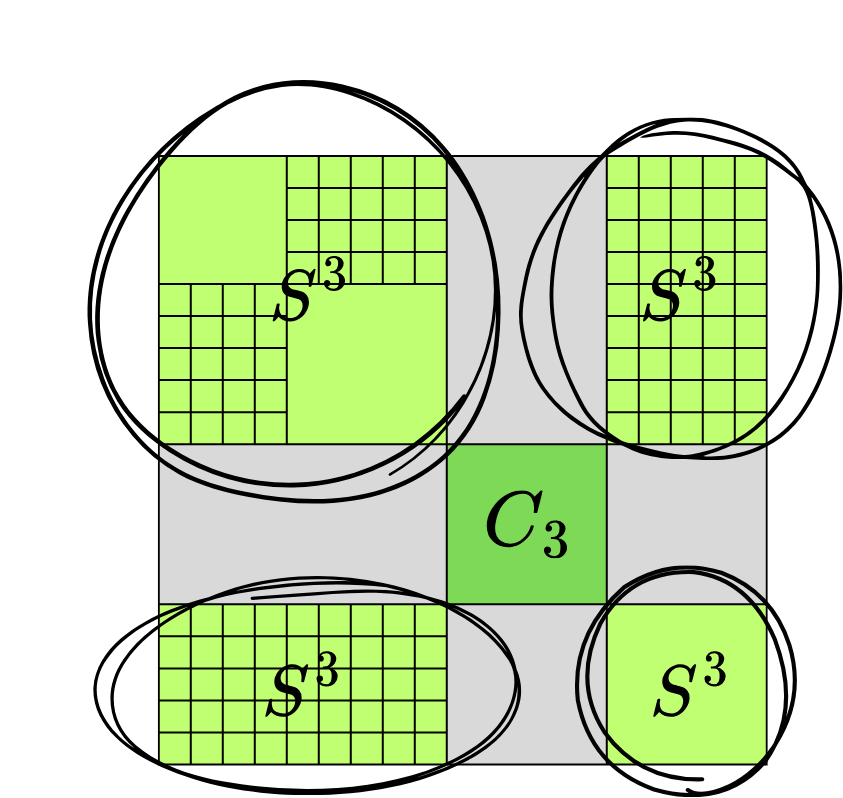}
    \caption{The subarray $S^3\subset P$ (light green) defined by removing the $x$-walls and $y$-walls of $P$ that intersect with the minor $C_3$ (colored in gray and dark green), yielding the size $\left(\big\lfloor \frac{n}{2} \big\rfloor-c_3\right) \times \left(\big\lfloor \frac{n}{2} \big\rfloor-c_3\right) \times k$.}
    \label{fig:def S}
    \end{figure}

Associate with $S^i$ a bipartite graph $G^i=\langle X\cup Y, E^i\rangle$, where the vertex sets $X$ and $Y$ represent the rows and columns of $D^i=D(S^i)$, respectively, thus $|X|=|Y| = \big\lfloor \frac{n}{2} \big\rfloor-c_i$. An edge $(x,y)\in X\times Y$ is included in $E^i$ iff $D^i(x,y)=0$.

We find a Hamiltonian cycle in $G^i$ using a condition due to Moon and Moser \cite{moon1963hamiltonian}.
Theirs is a bipartite counterpart of the better-known Dirac's Theorem \cite{dirac1952some}.
It states that every bipartite graph $\langle X\cup Y, E\rangle$  
with $|X|=|Y|=m$ where all vertex degrees are at least $m/2$ has a Hamiltonian cycle.
    
Here, the degree of vertex $x\in X$ in $G^i$, is the number of $0$-cells in row $x$ in $D^i$.
The total number of cells in this row is $\big\lfloor \frac{n}{2} \big\rfloor-c_i$. 
Each Hamiltonian cycle in a 
lower layer turns at most two of them into $\frac{1}{2}$-cells. 
Also, this row meets some 
$C_j$-minor along the diagonal, discounting $c_j$ cells.
All told, this yields 
$\text{deg}(x)\geq\big\lfloor \frac{n}{2} \big\rfloor-c_i-c_j - 2(i-1)\geq \big\lfloor \frac{n}{2} \big\rfloor-c_i - 2i-3$ (see \autoref{fig:a row in P}). Let us verify that Moon and Moser's condition holds.

Namely, we need to show that 
$$\text{deg}(x)\geq \frac{|X|}{2} =\frac{\big\lfloor \frac{n}{2} \big\rfloor-c_i}{2},$$ 
which simplifies to
$$i\leq \frac{\big\lfloor \frac{n}{2} \big\rfloor-c_i -6}{4}.$$ Since 
$i\leq k \approx \frac{n}{10}$, this 
is the case for $n\ge 114$.
    
Let $L_i$ be the adjacency matrix of the Hamiltonian cycle in $G^i$ and 
define the $i$-th layer as follows (see \autoref{fig:stage-1-ham-cycle}):
$$P(a,b,i)\xleftarrow{}\frac{1}{2}\text{~for every~} (a,b)\in L_i
{\text{~and zero for every~} S^i\setminus L_i}.$$ 

\begin{figure}[h!]
    \centering
    \includegraphics[width=0.37\textwidth]{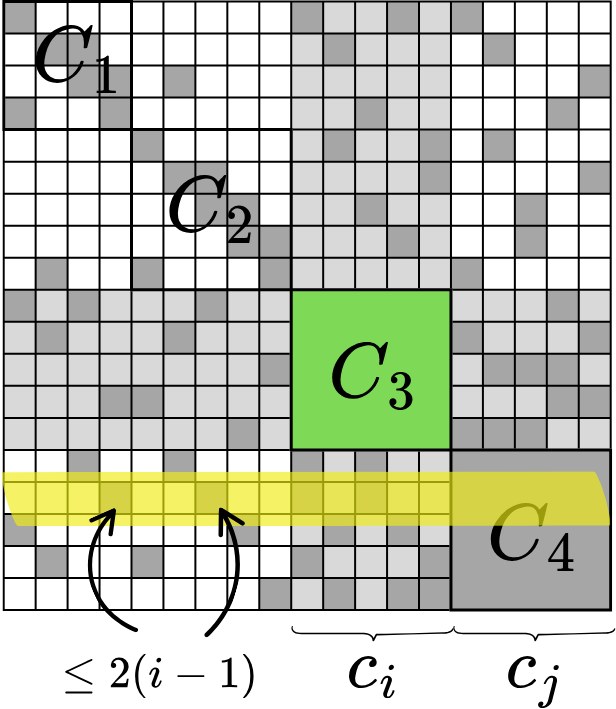}
    \caption{A lower bound on the vertex degrees in $G^i$ follows, provided that we find many $0$ entries in each row of $D^i$.
Nonzero entries are marked in dark gray.
A row in the sub-array $S^i$ has $\big\lfloor \frac{n}{2} \big\rfloor-c_i$ entries.
At most $2(i-1)$ non-zero entries in a row
are due to choices made at lower layers.
Also, each row meets some diagonal minor $C_j$ ($j\ne i$), and the corresponding $c_j$ cells do not count toward the degree. 
For example, in the row highlighted in yellow, $i=3$ and $j=4$, yielding a lower bound of
$\big\lfloor \frac{n}{2} \big\rfloor-c_3-c_4-2\cdot 2$ on the degree.}
    \label{fig:a row in P}
\end{figure}

\begin{figure}[h!]
    \centering
    \includegraphics[width=0.42\textwidth]{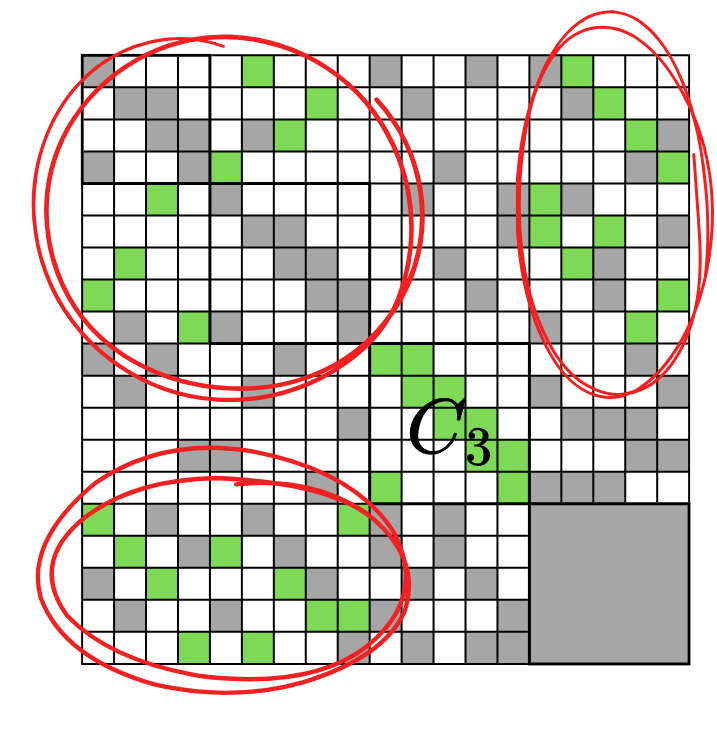}
    \caption{
Step $2$ applied to layer $3$. Circled in red - the sub-array $S^3$ where the Hamiltonian cycle $L_3$ is placed. Its cells are in green. Minor $C_3$ hosts the previously established standard cycle. In gray - cells of the support of previous layers and minors that are reserved for future steps.}
    \label{fig:stage-1-ham-cycle}
\end{figure}
\item[3.] 
Presently, the $i$-th layer of $P$ contains two disjoint cycles, the standard cycle in $C_i$ and the cycle $L_i$. 
We wish to combine them into a single Hamiltonian cycle $L_i'$, while respecting the requirement that we are constructing an array in $\cal H$. 
To this end we seek two $\frac 12$-cells $(x_1,y_1), (x_2,y_2)\in D^i(P)$ such that: 
\begin{equation}\label{eq:merge}
\begin{split}
(x_1,&y_1) \in \left(C_i \setminus \text{the main diagonal}\right) \text{~and~} (x_2,y_2) \in L_i .\\
&\text{Also, both~} (x_1,y_2) \text{~and~} (x_2,y_1) \text{~are~} 0-\text{cells}.
\end{split}
\end{equation}
We then perform the following switch: 
$$P(x_1,y_1, i),\ P(x_2,y_2, i) \xleftarrow{} 0
\text{~and~} 
P(x_1,y_2,i) ,\ P(x_2,y_1, i) \xleftarrow{} \frac{1}{2}.$$
This merges the two cycles into a single Hamiltonian cycle $L_i'$, without changing the total row and column sums, 
while all entries of $D^i(P)$ are $0$ or $1/2$, as planned.

The following counting argument
shows that such cells exist. We first
fix a $\frac 12$-cell $(x_1,y_1)$ in $(C_i \setminus \text{the main diagonal})$,
and search for corresponding $(x_2,y_2) \in L_i$ such that conditions (\ref{eq:merge}) hold.
Let $\overline{X_2}$ (resp.\ $\overline{Y_2}$)
be the set of non-zero cells in $D^i$, excluding those in $C_i$, that share a column (resp.\ row) with the fixed cell $(x_1,y_1)$ and thus cannot serve as $(x_2,y_1)$ (resp.\ $(x_1,y_2)$):
$$\overline{X_2}:=\{ (x,y_1)\not\in C_i \mid D^i(x,y_1) > 0 \}\ ;\ \overline{Y_2}:=\{ (x_1,y)\not\in C_i \mid D^i(x_1,y)> 0\}$$

For each cell in $\overline{X_2} \cup \overline{Y_2}$ there are two cells in the Hamiltonian 
cycle $L_i$ that violate condition (\ref{eq:merge}), because
each line in the sub-array $D(S^i)$ contains exactly two entries of the cycle.
In this way we eliminate at most
$2\cdot \left(\left|\overline{X_2}\right|+\left|\overline{Y_2}\right|\right)$ cells of $L_i$ (see \autoref{fig:stage-1-mix-new}).
It remains to show that 
$$|L_i| > 2\cdot \left(\left|\overline{X_2}\right|+\left|\overline{Y_2}\right|\right).$$

Indeed, 
$$\left|\overline{X_2}\right| = \left|\overline{Y_2}\right| = 2(i-1),$$ 
because
for each $\ell<i$ the cycle $L'_\ell$ contributes two non-zero cells (outside of $C_i$)
to $\overline{X_2}$, resp.\ $\overline{Y_2}$.
The support size of $L_i$ is
$$\left|L_i\right| = 2\cdot \left(\Big\lfloor \frac{n}{2} \Big\rfloor-c_i\right),$$
being a Hamiltonian cycle in the sub-array $D(S^i)$ of side length 
$\left(\big\lfloor \frac{n}{2} \big\rfloor-c_i\right)$.
Finally, for $i\leq k \approx \frac{n}{10}$ and $c_i\leq 5$. So, if $n>15$, then:
$$\left|L_i\right| - 2\cdot \left(\left|\overline{X_2}\right|+\left|\overline{Y_2}\right|\right) = 
2\cdot \Big\lfloor \frac{n}{2} \Big\rfloor -2\cdot c_i -8 (i-1)  \ge
\frac{n}{5} - 3 > 0 $$ 
Thus, there exists $(x,y)\in L_i$ s.t.\ $(x,y_1)\not\in \overline{X_2}$ and $(x_1,y)\not\in \overline{Y_2}$, enabling the desired merging operation, which yields a single Hamiltonian cycle $L_i'$, 
as illustrated in \autoref{fig:stage-1-mix}. 
\end{itemize}

\begin{figure}[ht]
    \centering
    \includegraphics[width=0.53\textwidth]{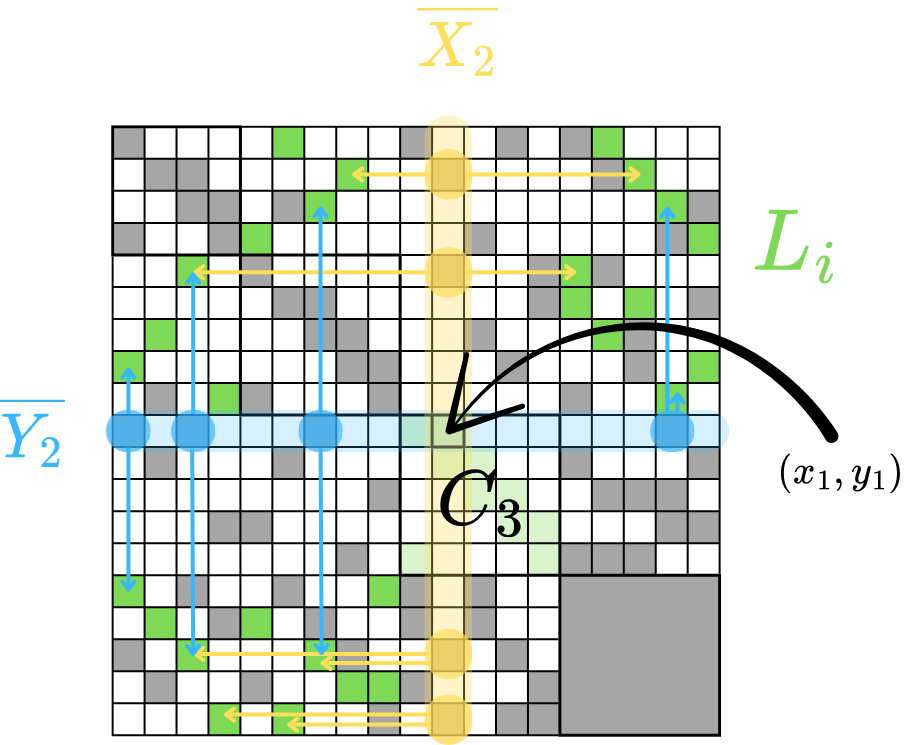}
    \caption{
    $(x_1,y_1)$ is a fixed $\frac{1}{2}$-cell in $C_i$ outside the main diagonal. 
    $\overline{X_2}$ (Yellow) is the set of non-zero cells in column $y_1$ (excluding $C_i$).
    Likewise, $\overline{Y_2}$ (Blue) is the set of non-zero cells in row $x_1$ (excluding $C_i$). 
    Arrows indicate how entries in $\overline{X_2}\cup \overline{Y_2}$ eliminate potential swap candidates in the Hamiltonian cycle $L_i$ (in green) at their same columns or rows.
    The proof shows that green cells outnumber the eliminated candidates, so that $L_i$ must contain a cell that is eligible for a swap.}
    \label{fig:stage-1-mix-new}
\end{figure}

\begin{figure}[h!]
    \hspace*{3.3cm}
    \includegraphics[width=0.45\textwidth]{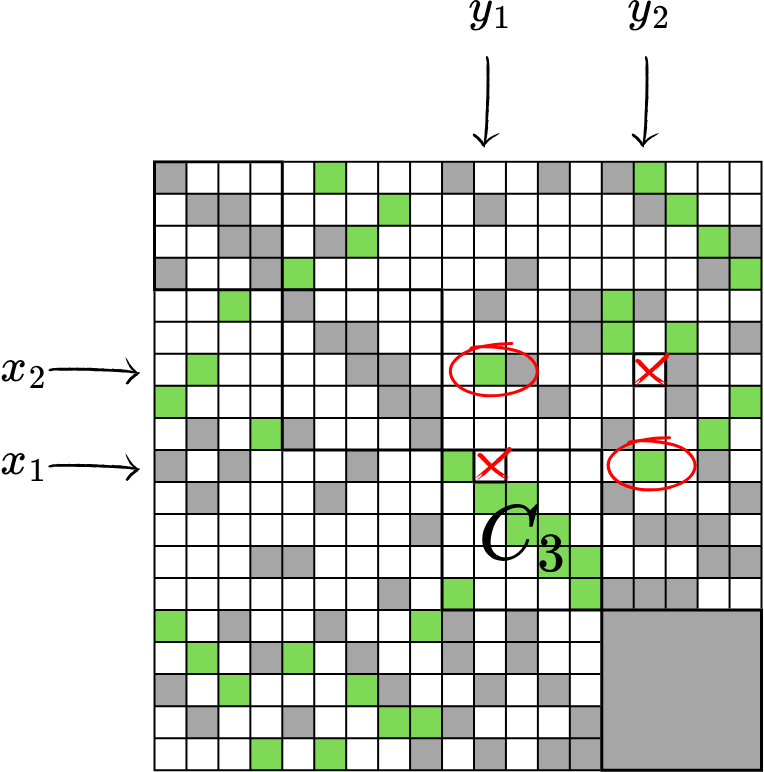}
    \caption{
    With two $\frac 12$-cells $(x_1,y_1) \in \left(C_i \setminus \text{the main diagonal}\right)$ and $(x_2,y_2) \in L_i$, s.t.\ both $(x_1,y_2)$ and $(x_2,y_1)$ are $0$-cells, we apply the switching operation indicated in red (step $3$).}
    \label{fig:stage-1-mix}
\end{figure}

\subsection{Stage 2 - Connecting the Construction: Layer $k+1$}
We have constructed so far $L'_1, L'_2, \ldots, L'_k$, disjoint Hamiltonian cycles
in $P$, and, similarly, $L''_1, L''_2, \ldots, L''_k$, disjoint Hamiltonian cycles in $Q$, two cycles per layer. For each $i\leq k$, $L'_i$ and $L''_i$ meet
several entries of main diagonal of $D^k=D^k(H)$. 
The purpose of the cycle $L_{k+1}$ we choose at layer $k+1$ is to "glue" all previously constructed cycles into a single connected structure. To begin, we construct:
\begin{itemize}
    \item The main diagonal of $H^{(k+1)}$ 
    $$H(i,i,k+1) \xleftarrow{} \frac{1}{2}\text{~for~}i=1,\ldots,n$$
\end{itemize}

Next, we complete the main diagonal of $H^{(k+1)}$ into a single cycle using the previously unused parts of $H$. 
Namely, $NE^{(k+1)}=H(1\dots\big\lfloor \frac{n}{2} \big\rfloor, \big\lfloor \frac{n}{2} \big\rfloor+1\dots n,k+1)$ and $SW^{(k+1)}=H(\big\lfloor \frac{n}{2} \big\rfloor+1\dots n, 1\dots\big\lfloor \frac{n}{2} \big\rfloor,k+1)$, see \autoref{fig:stage-2-even-case}.

The specifics depend on the parity of $n$. For even $n$, we define:
\begin{itemize}
    \item In $NE^{(k+1)}$:
    $$H(i,i+\frac{n}{2},k+1) \xleftarrow{} \frac{1}{2}\text{~for~}i=1,\dots,\frac{n}{2}$$
    \item In $SW^{(k+1)}$
    $$H(\frac{n}{2}+i,i+1,k+1) \xleftarrow{} \frac{1}{2} \text{~for~}i=1,\dots,\frac{n}{2}-1$$
    $$H(n,1,k+1) \xleftarrow{} \frac{1}{2}$$
    \item[] and zeros elsewhere. 
\end{itemize}  

\begin{figure}[h!]
    \centering
    \includegraphics[width=0.5\linewidth]{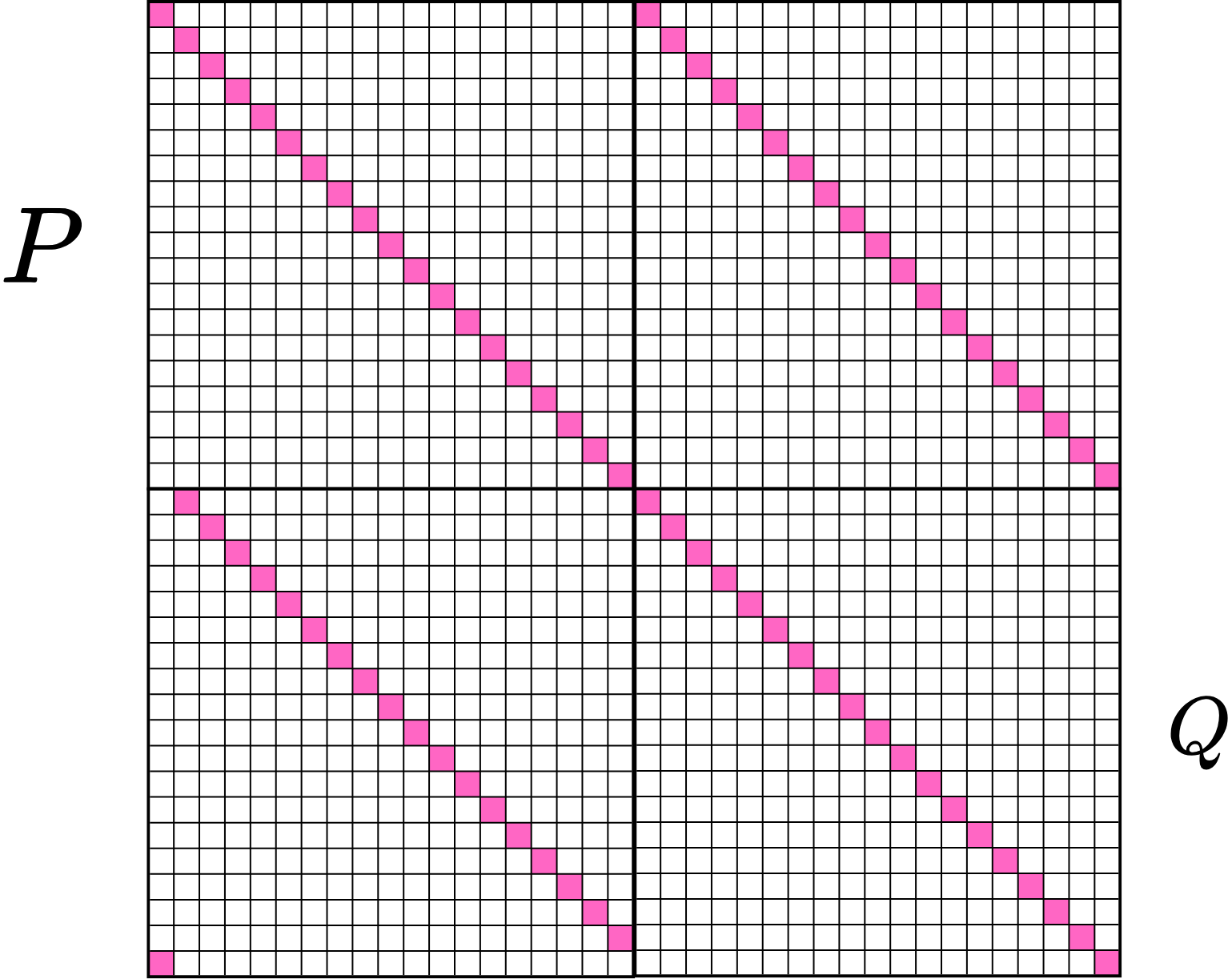}
    \caption{The construction of layer $k+1$ when $n$ is even.}
    \label{fig:stage-2-even-case}
\end{figure}

If $n$ is odd, the number of rows and columns in $NE^{(k+1)}$ and $SW^{(k+1)}$ differ by one.
Therefore, to complete the cycle we must use a cell from $Q^{(k+1)}$.
This is possible, as each row and column of $Q^{(k+1)}$ has available $0$-cells.
We omit the technical details and provide instead a graphical illustration of the necessary steps, see \autoref{fig:stage-2-odd-case}. 
\begin{figure}[h!]
    \centering
    \includegraphics[width=0.55\linewidth]{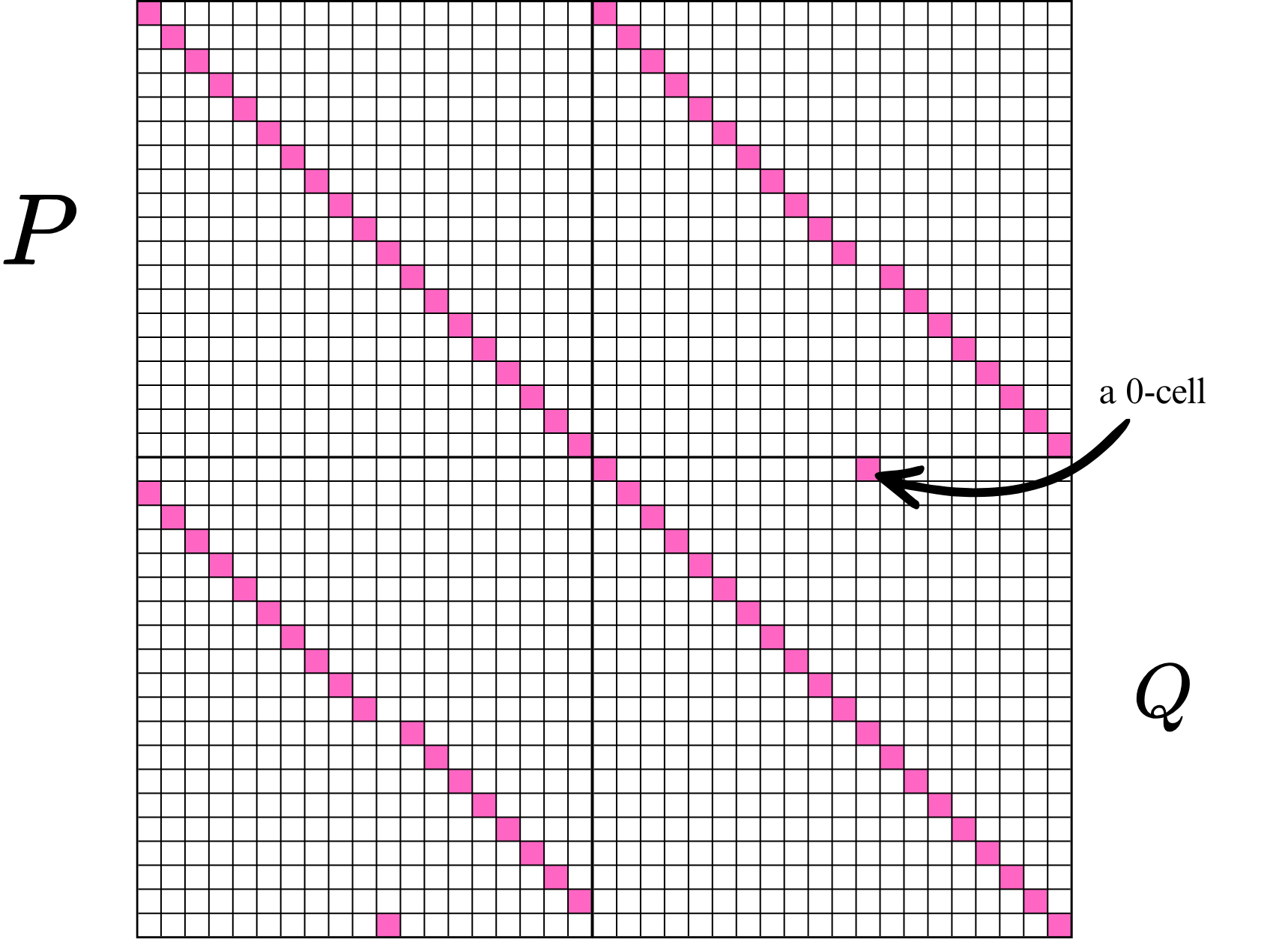}
    \caption{The construction of layer $k+1$ when $n$ is odd. In this case we use a $0$-cell in $Q$.}
    \label{fig:stage-2-odd-case}
\end{figure}

Note that in this stage, we turn the $\frac{1}{2}$-cells of the main diagonal of $D$ into $1$-cells and another $n$ cells turn from $0$-cells into $\frac{1}{2}$-cells. 

\subsection{Stage 3 - Making $U_{k+2}$ Non-Bipartite: Layer $k+2$}
In this stage we define layer $k+2$ such that the associated graph $U_{k+2}$ 
is non-bipartite. As a consequence, the graph associated with the final construction 
is also non bipartite. Provided that this final graph is connected, by 
\cref{lemma:who_is_vertex}, we obtain the desired vertex of the polytope $\Delta_n$.

Recall the matrix $D^\ell=D^\ell(H)$ that is associated with slice $(1,\ell)$ of $H$.
Namely, $D (i,j)= \sum_{\nu=1}^\ell H(i,j,\nu)$.

The graph $U_{k+1}$ consists of $2k+1$ cycles of even length that are 
disjoined except at the vertices that correspond to the main diagonal of $D^{k+1}$.
Had it contained an odd cycle (which it does not...) we were done.
Otherwise, $U_{k+1}$ is a connected bipartite graph,
whence, by \cref{prop:bi-colored connected graph}, it admits a unique 2-vertex-coloring 
(up to swapping the two colors), denoted by
red and blue, see \autoref{fig:stage-3-before}.

This vertex-coloring of $U_{k+1}$ induces a
$2$-coloring of the $\frac 12$-cells of $D^{k+1}$. Let $\varepsilon$ be such a cell.
In the shaft of $H$ corresponding to $\varepsilon$, there is a single entry of $\frac 12$, which we view as a vertex $v$ of $U_{k+1}$.
We color the cell $\varepsilon$ in $D^{k+1}$ by the color of $v$ in $U_{k+1}$, red or blue. 

Observe that for every $\ell\leq k+1$, each row (resp.\ column) in layer $H^\ell$ contains exactly two $H$-entries of $\frac 12$. 
They correspond to two neighboring vertices in $U_{k+1}$, and therefore 
one of them is red, and the other is blue. 
The only shafts with two non-zero cells reside along the main diagonal shafts of the slice $H^{(1,k+1)}$, their corresponding vertices in $U_{k+1}$ are of distinct colors.
Consequently, each row (resp.\ column) of $D^{k+1}$ has a single $1$-cell,
the number of its $\frac{1}{2}$-cells is $2k$ and the remaining $n-2k-1$ cells equal $0$. 
Moreover, the $\frac{1}{2}$-cells in every row and column are evenly colored: $k$ red and $k$ blue.
 
In defining layer $k+2$ we rely on
the present colors of the $\frac{1}{2}$-cells in $D^{k+1}$.
Specifically, during the construction of the current layer, certain $\frac{1}{2}$-cells will turn into $1$-cells.
For each such cell in $D^{k+2}$, the corresponding shaft in $H^{k+2}$ will contain two non-zero entries, whose associated vertices in $U_{k+2}$ are oppositely colored. To avoid confusion and maintain consistency, throughout
this stage, we refer to those cells by their initial color, i.e.\ by the color of the vertex corresponding to the lower non-zero entry.

\begin{figure}[h!]
    \centering
    \includegraphics[width=0.45\linewidth]{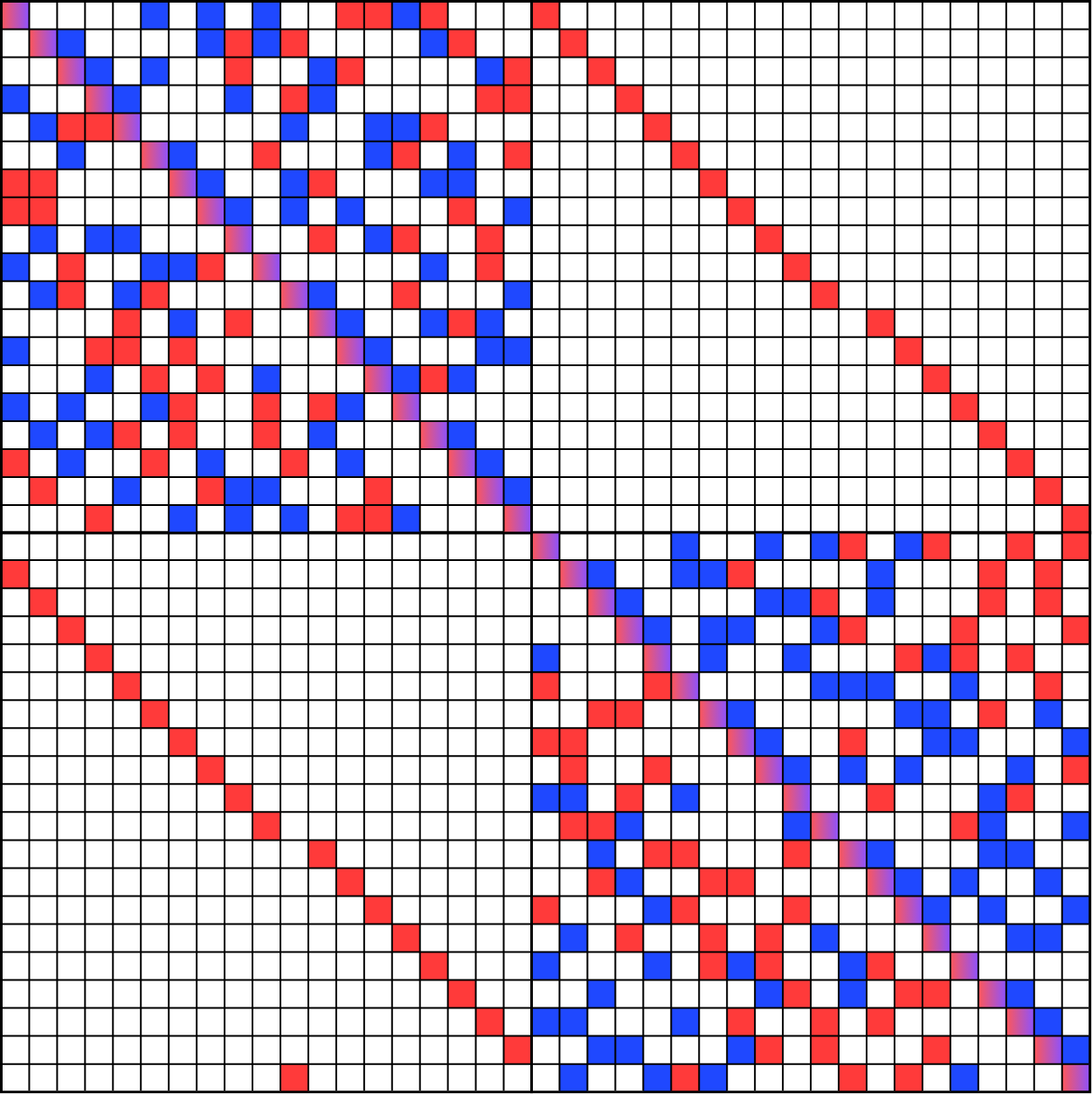}
    \caption{The 2-coloring of $D^{k+1}(H)$, the main diagonal consists of $1$-cells and the corresponding vertices in $U_{k+1}$ are bi-colored.}
    \label{fig:stage-3-before}
\end{figure}

To construct layer $k+2$, our plan is to place two permutation matrices $\overline{P}$ and $P$ as follows:
\begin{enumerate}
\item
We will choose
$\overline{P}$ as a permutation matrix that is supported
on the $\frac{1}{2}$-cells of $D^{k+1}$. We insist that
the entries of this permutation are of both colors.
\Cref{lemma:bi-chromatic} below shows that this condition holds for
asymptotically almost every choice of $\overline{P}$.
\item\label{cond:non_bipartite}
Likewise, we choose $P$ to be a permutation matrix that is supported
on the $0$-cells of $D^{k+1}$. Here we require that at least one of $P$'s
cells has two $\overline{P}$-{\em neighbors} of different colors.
(A $\overline{P}$-{\em neighbor} of a $0$-cell $\pi$ is a cell in  
$\overline{P}$ that resides in $\pi$'s row or column).
\Cref{lemma:distinct_neighbors} shows that this requirement is feasible.
\item 
With the introduction of the cells of $\overline{P}$ and $P$
to layer $k+2$ (see \autoref{fig:stage 3 - permutations}), the graph $U_{k+2}$ is non-bipartite, as the following
argument shows: We first consider the $\overline{P}$ cells and
color each one opposite to the color of the non-zero cell in the same shaft.
This is the unique extension of the $2$-coloring of $U_{k+1}$.
However, the condition stated in \Cref{cond:non_bipartite}
shows that the resulting $U_{k+2}$ is non-bipartite, due to the
cell that has both a blue and a red $\overline{P}$-neighbor.
\end{enumerate}

We first construct $\overline{P}$.
Recall $\overline{G}$, the bipartite graph with vertex sets $X$ and $Y$, 
the row and column sets of $D^{k+1}$, respectively, 
where $(x,y)$ is an edge iff $D^{k+1}(x,y)=\frac{1}{2}$. 
Let $M$ be the (bipartite) adjacency matrix of $\overline{G}$. 
Similarly, let $\overline{G_R}$ (resp.\ $\overline{G_B}$) be the subgraph of $\overline{G}$ that corresponds to the red (resp.\ blue) $\frac{1}{2}$-cells in $D^{k+1}$,
with adjacency matrices $M_R$ and $M_B$, respectively.

\begin{lemma}\label[lemma]{lemma:bi-chromatic}
Asymptotically almost every permutation matrix supported on the $\frac{1}{2}$-cells in $D^{k+1}$ is bi-colored.
\end{lemma}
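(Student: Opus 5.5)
We need to show that the number of perfect matchings of $\overline{G}$ that are monochromatic, i.e.\ contained in $\overline{G_R}$ or in $\overline{G_B}$, is $o(\mathrm{per}(M))$. We use the regularity established above. Every row and column of $D^{k+1}$ has exactly $2k$ cells equal to $\frac12$, namely $k$ red and $k$ blue. Hence $\overline{G}$ is $2k$-regular, while $\overline{G_R}$ and $\overline{G_B}$ are each $k$-regular bipartite graphs on $n+n$ vertices.

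The plan is to compare permanents.

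\textbf{Lower bound.} By the van der Waerden bound (Egorychev--Falikman), applied to the doubly stochastic matrix $M/(2k)$,
$$\mathrm{per}(M)\ \ge\ (2k)^n\frac{n!}{n^n}\ \ge\ \left(\frac{2k}{e}\right)^n.$$

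\textbf{Upper bound.} By Br\`egman's theorem (Minc's conjecture), a $0/1$ matrix with all row sums equal to $k$ satisfies
$$\mathrm{per}(M_R)\ \le\ (k!)^{n/k}.$$
By Stirling, $(k!)^{1/k} = \frac{k}{e}(2\pi k)^{1/(2k)}(1+o(1))$. Since $k\approx n/10$, we have $(2\pi k)^{n/(2k)} = (2\pi k)^{O(1)} = n^{O(1)}$, and therefore
$$\mathrm{per}(M_R)\ \le\ \left(\frac{k}{e}\right)^n n^{O(1)}.$$
The same bound holds for $\mathrm{per}(M_B)$.

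\textbf{Comparison.} The fraction of monochromatic permutation matrices is therefore at most
$$\frac{\mathrm{per}(M_R)+\mathrm{per}(M_B)}{\mathrm{per}(M)}\ \le\ 2\cdot 2^{-n} n^{O(1)}\ \longrightarrow\ 0.$$
This proves the lemma.

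The step to watch is the sub-exponential loss in Br\`egman's bound. It is harmless exactly because $k=\Theta(n)$, so $k!^{n/k}$ differs from $(k/e)^n$ only by a polynomial factor, while the doubled degree of $\overline{G}$ gains a factor $2^n$. If one wanted to avoid Br\`egman's theorem, a cruder route would also suffice here. One could bound $\mathrm{per}(M_R)$ by $\prod_i r_i = k^n$ and compare it with the van der Waerden lower bound $(2k)^n n!/n^n \approx (2k/e)^n\sqrt{2\pi n}$. This fails, since $k^n$ is not smaller than $(2k/e)^n$. So Br\`egman's theorem is genuinely needed, and it is the key input.
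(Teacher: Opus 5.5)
Your proposal is correct and follows the paper's proof essentially verbatim: it writes the probability of a monochromatic matching as $(\mathrm{per}(M_R)+\mathrm{per}(M_B))/\mathrm{per}(M)$, bounds the numerator by Br\`egman--Minc with Stirling and the denominator by Egorychev--Falikman, and the paper phrases the result as $2\bigl((ek)^{1/k}/2\bigr)^n\to 0$ (valid even for fixed $k\ge 4$), whereas you use $k=\Theta(n)$ to absorb the same loss into an $n^{O(1)}$ factor. One small point: state the Stirling step as $k!<ek(k/e)^k$, which gives $(k!)^{n/k}<(k/e)^n(ek)^{n/k}$ directly, since an unspecified $(1+o(1))$ factor inside $(k!)^{1/k}$ is only harmless after raising to the $n$-th power because it is in fact $1+O(1/k^2)$.
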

\begin{proof}
Let $F$ be the event that a uniformly chosen permutation of the $\frac{1}{2}$-cells in 
$D^{k+1}$ is monochromatic. 
 
The graph $\overline{G}$ has exactly $\text{per}(M)$ perfect matchings.
Likewise, it has $\text{per}(M_R)$ resp.\ $\text{per}(M_B)$ all-red resp.\ all-blue perfect matchings.
The row and column sums of $M$ are $2k$, while in $M_R$ and $M_B$ they equal $k$. 
We apply claim \ref{claim:bound_d-regular}, (see below), to $M$, $M_R$ and $M_B$, and conclude:
$$\Pr(F)= \frac{\text{per }(M_R)+\text{per }(M_B)}{\text{per }(M)} < 2\cdot \frac{\Big((ek)^{1/k}\cdot \frac{k}{e}\Big)^n}{\Big( \frac{2k}{e} \Big)^n}\leq 2\cdot \bigg( \frac{(ek)^{1/k}}{2} \bigg)^n \xrightarrow{n\xrightarrow{}\infty}0.$$
The last limit is valid for $k \ge 4$, which does not affect the validity of
\cref{thm:main}.
\end{proof}

Next, we turn to the numerical claim used in the proof:

\newtheorem{claim}{Claim}[section]
\begin{claim}\label[claim]{claim:bound_d-regular}
Let $M$ be the $X\times Y$ adjacency matrix of a $d$-regular bipartite graph
$\langle X,Y; E\rangle$ with $|X|=|Y|=n$. Then:
$$\Big(\frac{d}{e}\Big)^n < \text{~per~}(M) < \Big(\frac{d}{e}\Big)^n\cdot(ed)^{n/d}$$
\end{claim}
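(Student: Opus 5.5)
The lower bound comes from the Van der Waerden bound on the permanent, and the upper bound from Brègman's theorem (the Minc conjecture); the remaining work is elementary Stirling-type estimates. Since every row and column of $M$ sums to $d$, the matrix $\frac1d M$ is doubly stochastic. Egorychev and Falikman's solution of the Van der Waerden conjecture gives $\text{per}(\frac1d M)\ge \frac{n!}{n^n}$, hence
$$\text{per}(M)\ge d^n\frac{n!}{n^n}.$$
The standard estimate $n!>\sqrt{2\pi n}\,(n/e)^n$ then yields $\text{per}(M)>\sqrt{2\pi n}\,(d/e)^n>(d/e)^n$, which is the left inequality, strict as required.

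For the upper bound, Brègman's theorem states that a $0/1$ matrix with row sums $r_1,\dots,r_n$ satisfies $\text{per}(M)\le\prod_i (r_i!)^{1/r_i}$. With all $r_i=d$ this becomes
$$\text{per}(M)\le (d!)^{n/d}.$$
It remains to show $d!<(d/e)^d\cdot ed$ for every $d\ge1$. Raising both sides to the power $n/d$ then gives $(d!)^{n/d}<(d/e)^n(ed)^{n/d}$, the right inequality.

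To prove $d!<ed\,(d/e)^d = e\,d^{d+1}e^{-d}$, I would use the integral comparison $\ln d!=\sum_{j=2}^{d}\ln j\le\int_1^{d}\ln x\,dx+\ln d$. This holds because $\ln$ is increasing: the sum up to $d-1$ is at most the integral up to $d$. Evaluating gives $\ln d!\le d\ln d-d+1+\ln d$, i.e.\ $d!\le e\,d\,(d/e)^d$. Strictness is easiest from Stirling's upper bound $d!\le e\sqrt d\,(d/e)^d$, since $\sqrt d<d$ for $d\ge2$. The case $d=1$ needs a separate check: $1<e\cdot e^{-1}\cdot 1\cdot e=e$ holds.

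I expect no real obstacle: the heavy lifting is done by citing the two classical permanent bounds. The only care needed is the strictness of both inequalities and the small-$d$ cases, which the explicit computations above handle.
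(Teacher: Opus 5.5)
Your route is the paper's: Egorychev--Falikman applied to $\frac1d M$ for the lower bound, and Br\`egman--Minc plus the estimate $d!<ed\,(d/e)^d$ for the upper bound. For $d\ge 2$ everything you write is correct. Your integral comparison is in fact already strict there, because it discards $\int_1^2\ln x\,dx=2\ln 2-1>0$.

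The one false step is your separate check at $d=1$. There $ed\,(d/e)^d=e\cdot 1\cdot e^{-1}=1=1!$, not $e$; you picked up an extra factor of $e$. So $d!<ed\,(d/e)^d$ is an equality at $d=1$, and your claim that it holds for every $d\ge 1$ is wrong.

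This is a defect of the statement, not of your method. A $1$-regular bipartite graph is a perfect matching, so $M$ is a permutation matrix with $\text{per}(M)=1$. The right-hand side is $(1/e)^n\cdot e^{n}=1$, so the strict upper bound is false for $d=1$.

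The paper's proof avoids this only tacitly. It quotes its Stirling bound $m!<em\,(m/e)^m$ only for $m>1$. It applies the upper bound only with $d=k$ and $d=2k$, $k\ge 4$, in the proof that almost every permutation on the $\frac12$-cells is bi-colored. The small values of $d$, down to $d=1$, arise in stages 4 and 5, where only the lower bound is used, and that bound holds for all $d\ge1$.

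So rather than claiming to verify the $d=1$ case, you should state the upper bound for $d\ge 2$, or allow equality when $d=1$.
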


\begin{proof}
We start with the upper bound. By the Brégman–Minc inequality \cite{bregman1973some}\cite{minc1963upper}
$$\text{per}(M)\leq \prod_{i=1}^n (d!)^{1/d} < \Big(\frac{d}{e}\Big)^n\cdot(ed)^{n/d},$$
where we use the fact that every row-sum of $M$ equals $d$, as well as
the following version of Stirling's formula (that holds for every $m>1$,
e.g., \cite{hummel1940note}) 
\begin{equation}\label{eq:m_factorial}
        \Big( \frac{m}{e}\Big)^m < m! < em\cdot \Big( \frac{m}{e}\Big)^m
\end{equation}

For the lower bound, we use Egorichev and Falikman's proof
of the van der Waerden's conjecture \cite{egorychev1981solution}\cite{falikman1981proof}. 
Since $\frac{1}{d}\cdot M$ is a doubly stochastic matrix it follows that:
$$\text{per}(M)= d^n\cdot \text{per}\Big(\frac{1}{d}\cdot M\Big) \geq d^n\cdot \frac{n!}{n^n} > \Big(\frac{d}{e}\Big)^n$$
where again we use \cref{eq:m_factorial}.
\end{proof}

So, let us pick one bi-chromatic perfect matching $\overline{P}$ of the $\frac{1}{2}$-cells in $D^{k+1}$ (see left panel of \autoref{fig:stage 3 - P}). 
We turn to construct the permutation matrix $P$ that is supported on the $0$-cells of $D^{k+1}$.

\begin{lemma}\label[lemma]{lemma:distinct_neighbors}
    There exists a permutation matrix $P$ supported on the $0$-cells of $D^{k+1}$ s.t.\ at least one of its cells has $\overline{P}$-neighbors of distinct colors. 
\end{lemma}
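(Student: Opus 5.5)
The plan is to reduce \cref{lemma:distinct_neighbors} to finding a single suitable $0$-cell, and then to extend that cell to a permutation matrix on the $0$-cells.

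\textbf{Step 1: locating the cell.} Write $\overline{P}$ as the permutation $\sigma$, so its cells are $(x,\sigma(x))$. For a $0$-cell $(x,y)$ of $D^{k+1}$, its $\overline{P}$-neighbors are $(x,\sigma(x))$ in its row and $(\sigma^{-1}(y),y)$ in its column. Let $R\subseteq X$ be the set of rows whose $\overline{P}$-cell is red, and let $B\subseteq Y$ be the set of columns whose $\overline{P}$-cell is blue. Since $\overline{P}$ was chosen bi-chromatic via \cref{lemma:bi-chromatic}, both sets are nonempty. Moreover $|R|=r$ and $|B|=n-r$ for some $1\le r\le n-1$, because every column carries exactly one $\overline{P}$-cell. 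Any $0$-cell in $R\times B$ has a red row-neighbor and a blue column-neighbor, which is what we need.

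\textbf{Step 2: such a $0$-cell exists.} Suppose every cell of $R\times B$ is nonzero in $D^{k+1}$. As computed before \cref{lemma:bi-chromatic}, each row and each column of $D^{k+1}$ has exactly $2k+1$ nonzero cells: $2k$ cells equal to $\frac12$ and one cell equal to $1$. Take any $x\in R$; all $|B|$ cells of row $x$ in $B$ are nonzero, so $n-r=|B|\le 2k+1$. Likewise, a column $y\in B$ gives $r=|R|\le 2k+1$. Adding these yields $n\le 4k+2$, which contradicts $k\approx n/10$ once $n$ is moderately large. Hence some $0$-cell $(x_0,y_0)$ with $x_0\in R$ and $y_0\in B$ exists.

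\textbf{Step 3: extending to a permutation.} The graph $G$ of $0$-cells of $D^{k+1}$ is $(n-2k-1)$-regular bipartite, and $n-2k-1>0$. By K\"onig's edge-colouring theorem (equivalently, repeated application of Hall's theorem to regular bipartite graphs), its edge set decomposes into $n-2k-1$ perfect matchings. So every edge of $G$, and in particular $(x_0,y_0)$, lies in some perfect matching. Let $P$ be the corresponding permutation matrix. It is supported on $0$-cells, and its cell $(x_0,y_0)$ has $\overline{P}$-neighbors of distinct colors: a red one in its row and a blue one in its column. These two neighbors are distinct cells, since one is red and the other blue.

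The only real content lies in Step 2, the counting argument. I expect no obstacle there beyond checking the numerical margin, where $4k+2<n$ holds comfortably for $k\approx n/10$. One should also check the degenerate issue that the two neighbors are distinct, which is automatic because they have different colors.
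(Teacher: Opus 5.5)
Your proof is correct and follows the paper's argument. The paper assumes without loss of generality that at most $n/2$ cells of $\overline{P}$ are blue, fixes a row through a blue $\overline{P}$-cell, and counts its $n-2k-1$ zero-cells against the at most $n/2$ columns carrying a blue $\overline{P}$-cell; this is your $R\times B$ count with the colors and rows/columns swapped, and it rests on the same inequality $n>4k+2$, with the chosen edge then extended to a perfect matching by regularity of the $0$-cell graph exactly as you do.
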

\begin{proof}

As mentioned, $G^{k+1}$, the bipartite graph of $0$-cells, is  ($n-2k-1$)-regular. 
Recall that every edge in a
regular bipartite graph can be extended to a perfect matching.

Our plan is first to find a $0$-cell with a blue $\overline{P}$-neighbor in its row and a red $\overline{P}$-neighbor in its column.
Denote by $e$ the edge in $G^{k+1}$ that corresponds to such a cell, and let us find
a perfect matching in $G^{k+1}$ that contains $e$. We will denote the adjacency matrix 
of this matching by $P$.  

It remains to show that such a $0$-cell exists. 
Say wlog, that at least a half of $\overline{P}$'s cells are red. 
As we saw already, $\overline{P}$ is bichromatic, so let
$(x,y')\in \overline{P}$ be a blue cell. We turn to find an index $y$, s.t.\ $(x,y)$ is a $0$-cell and its second $\overline{P}$-neighbor, denoted by $(x',y)$, is red (see right panel of \autoref{fig:stage 3 - P}). 
The number of $0$-cells in row $x$ of $D^{k+1}$ is $n-2k-1 \approx \frac{4}{5} n$.
An index $y$ is ruled out if its corresponding candidate $(x',y)\in \overline{P}$ is blue.
But $\overline{P}$ has at most $\frac{n}{2}$ blue cells, so, there remain at 
least $\frac{4}{5} n - \frac 12 n=\frac{3}{10}n$ acceptable indices $y$.

\end{proof}

\begin{figure}[h!]
    \centering    \includegraphics[width=1.0\linewidth]{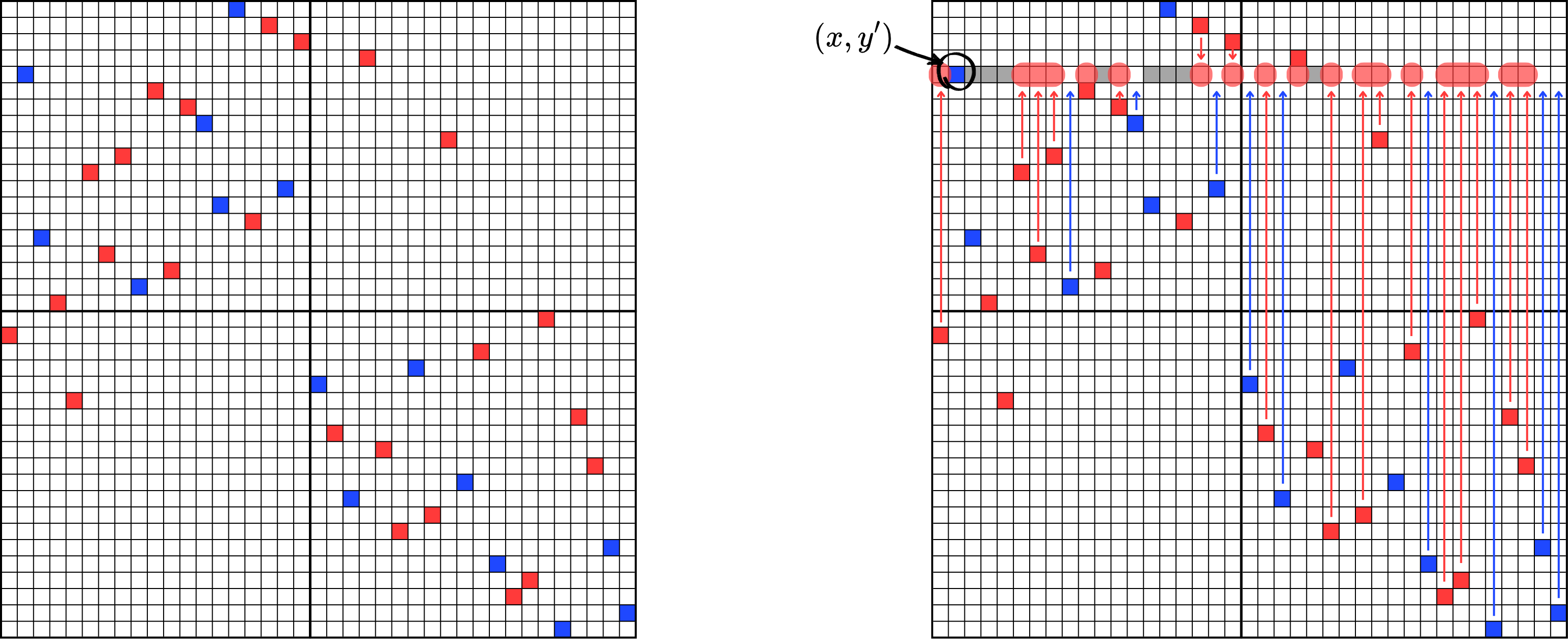}
    \caption{
Left panel: the chosen bi-chromatic perfect matching $\overline{P}$.
Right panel: constructing the perfect matching $P$.
We start from a blue cell $(x,y') \in \overline{P}$, and examine the $0$-cells in row $x$ ($\frac{1}{2}$-cells are shown in gray).
Each such $0$-cell has a $\overline{P}$-neighbor in its column, indicated by a vertical arrow. 
Highlighted in red are the $0$-cells with a red $\overline{P}$-neighbor, which represent the valid candidates for $(x,y)$. 
Our counting argument shows that at least one such cell exists, so that some $0$-cell has neighbors of distinct colors.}
\label{fig:stage 3 - P}
\end{figure}

\begin{figure}[ht]
    \hspace*{2.3cm}
    \includegraphics[width=0.55\linewidth]{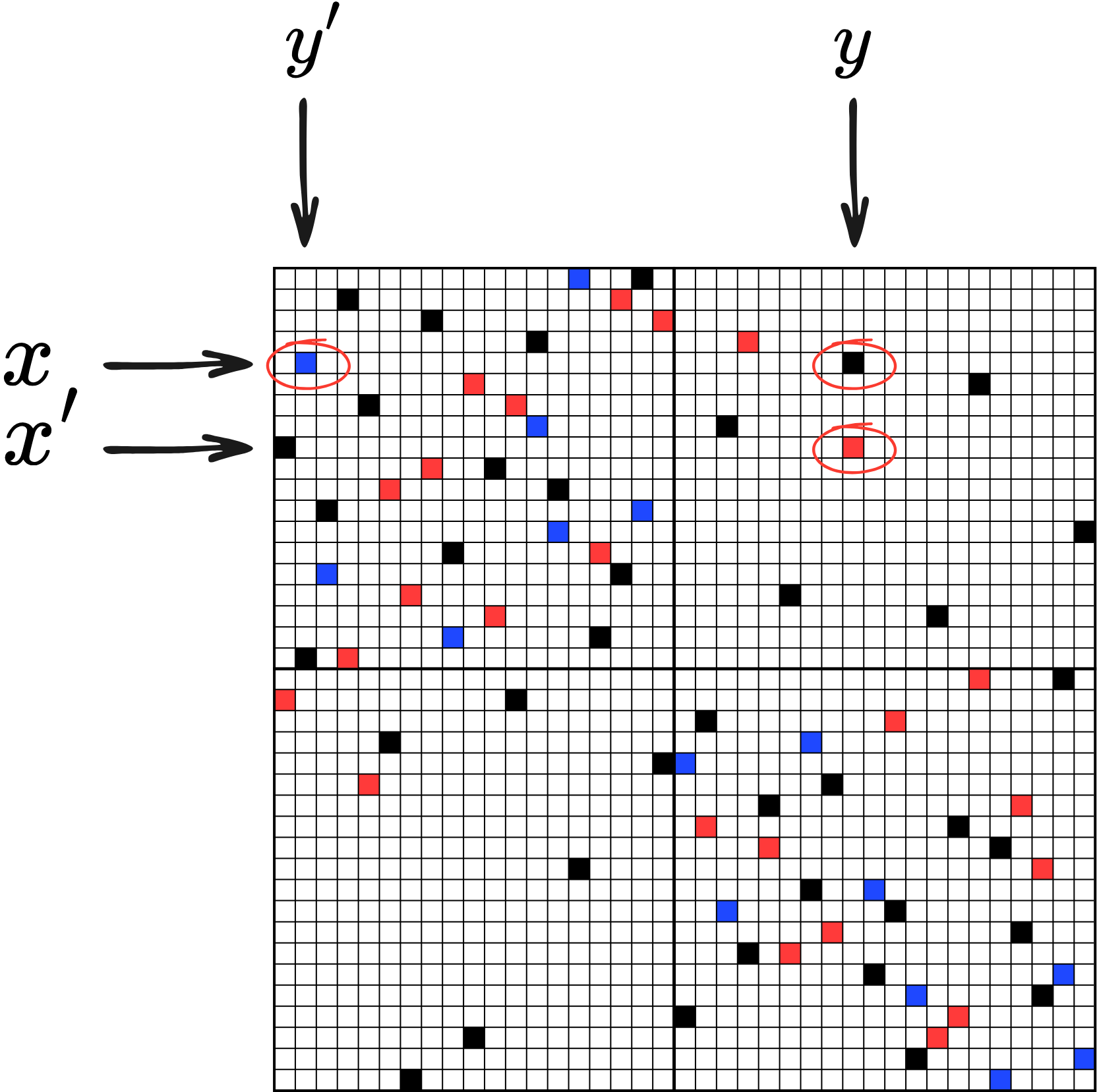}
    \caption{Layer $k+2$: a bi-colored permutation of the $\frac{1}{2}$-cells (red and blue), and another permutation of the $0$-cells (black) s.t.\ $U_{k+2}$ is not $2$-colorable.}
    \label{fig:stage 3 - permutations}
\end{figure}

Finally, we assign the cells that correspond to $\overline{P}$ and $P$:
$$H(a, b, k+2) \xleftarrow{} \frac{1}{2} \text{~for~} (a,b)\in \overline{P}\cup P
\text{~and zeros elsewhere}.$$

We conclude that $U_{k+2}$ is not bipartite, 
as witnessed by the vertex that corresponds to the cell $(x,y,k+2)$, 
which cannot be colored in either blue or red.
Consequently, $U_{i}$ is not bipartite for all $i\ge k+2$.
Additionally, the graph $U_{k+2}$ is connected, since every cell of $\overline{P}$ shares a shaft with a non-zero cell from the previous layers, and every cell of $P$ shares a row or a column with a cell of $\overline{P}$.

Note, and this is crucial, for the subsequent steps in our construction, that the graphs $G^{k+2}$ and $\overline{G}^{k+2}$ remain regular.
This is guaranteed because in each row (resp.\ column) of $D^{k+2}$ the number of $1$-cells is $2$, of $\frac{1}{2}$-cells is $2k$ and of $0$-cells is $n-2k-2$ (see \autoref{fig:stage-3-after}). 

\begin{figure}[h!]
    \centering
    \includegraphics[width=0.42\linewidth]{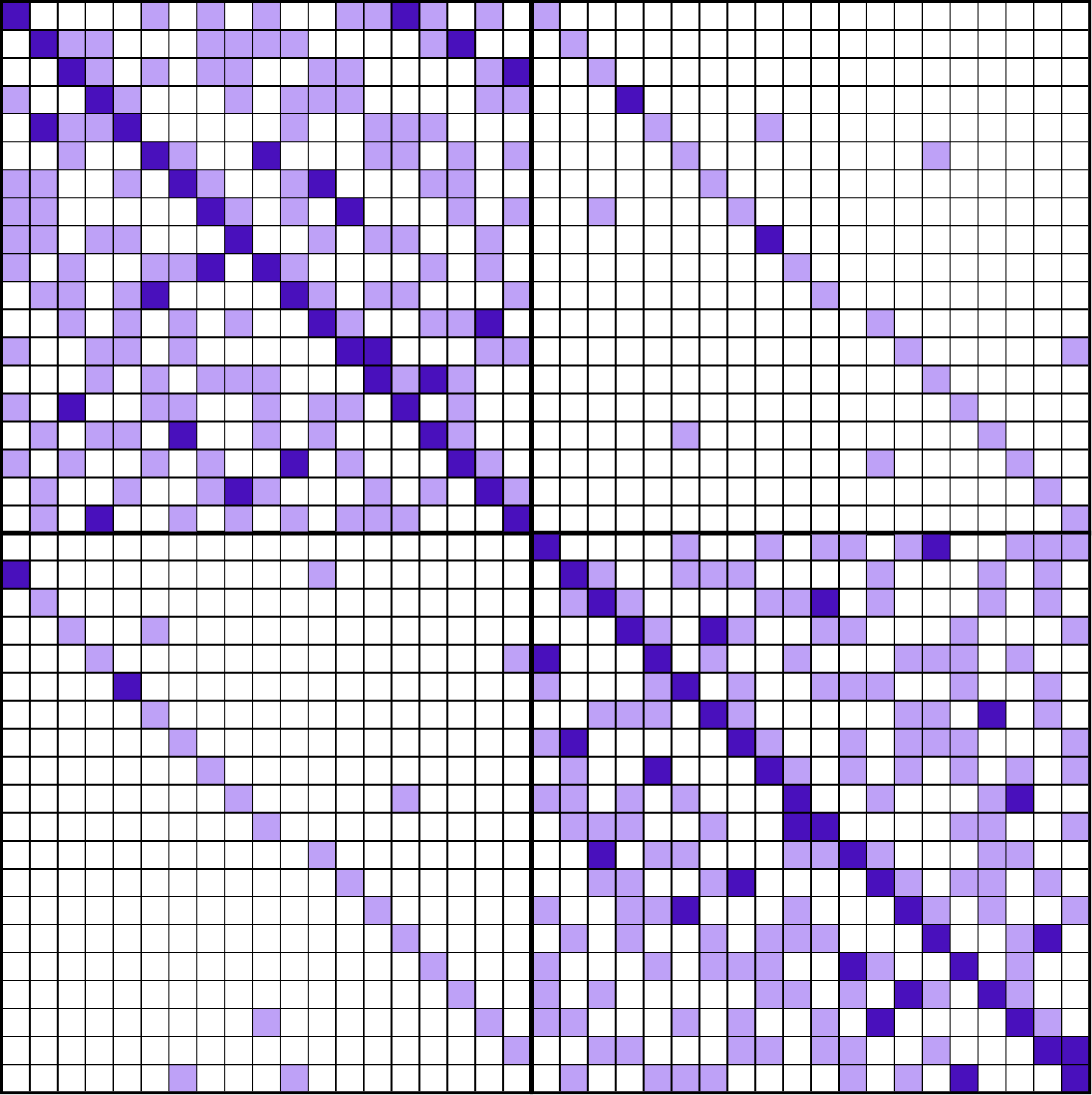}
    \caption{The construction of slice $(1,k+2)$, dark purple represents the $1$-cells, light purple represents the $\frac{1}{2}$-cells of $D^{k+2}(H)$.}
    \label{fig:stage-3-after}
\end{figure}

\subsection{Stage 4 - Eliminating $0$-Cells: Layers $k+3, \dots ,n-k$} \label{stage-4}
In the remaining two stages, we complete the construction of the array $H$, 
until finally $D^n(H)$ is the all-$1$'s matrix. 
To begin, we eliminate all of $D$'s $0$-cells, turning them into 
$\frac{1}{2}$-cells or $1$-cells.
This stage can be viewed as a simplified counterpart to stage $3$. 
The graph $G^{k+2}$ resp.\ $\overline{G}^{k+2}$ are ($n-2k-2$)-regular 
resp.\ ($2k$)-regular, and edge-disjoint. 
Let $M\subseteq E^{k+2}$ resp.\ $\overline{M} \subseteq \overline{E}^{k+2}$ be a perfect matching in $G^{k+2}$ resp.\ $\overline{G}^{k+2}$.
Define layer $(k+3)$ of $H$ by
\begin{equation*}
        H(a, b, k+3) \xleftarrow{} \frac{1}{2} \text{~for~}(a,b)\in M\cup \overline{M}
        \text{~and zeros elsewhere~}
\end{equation*}


Again, this step preserves the regularity of the graphs as we proceed from a
layer to the subsequent one. The number of $0$-cells per row or column in $D$ is reduced by one, while the number of $\frac{1}{2}$-cells is unchanged. 
We repeat this process iteratively until no $0$-cells remain, which
happens after $n-2k-2$ repeats, as we proceed from layer $k+3$ through layer 
$n-k$. 

Also, the graph $U_{i}$ remains connected for each layer $i = k+3, \ldots ,n-k$,
because each cell in layer $i$ either shares a shaft with a non-zero cell from a previous layer or shares a row or a column with a cell that does. 

\subsection{Stage 5 - Finalizing the Construction: Layers $n-k+1,\dots ,n$}\label{stage-5}
Presently $D^{n-k+1}(H)$ no longer has any $0$-cells. In this final stage we change every
remaining $\frac{1}{2}$-cell
into a $1$-cell, thereby completing the construction.
Each layer is constructed by using two disjoint perfect matchings, but unlike the previous 
stage, both matchings are extracted from the same graph. Since
the graph $\overline{G}^{n-k}$ is ($2k$)-regular, it contains
a perfect matching $\overline{M}_1\subseteq \overline{E}^{n-k}$. 
Removing $\overline{M}_1$ leaves a ($2k-1$)-regular residual graph, in
which we find another disjoint perfect matching $\overline{M}_2$. 
Define the $(n-k+1)$-th layer by assigning
$$H(a, b, n-k+1) \xleftarrow{} \frac{1}{2}\text{~for~} (a,b)\in \overline{M}_1\cup \overline{M}_2 \text{~and zeros elsewhere~}$$

We repeat this process, eliminating two $\frac 12$-cells per row or column in each layer, until the construction is complete after $k$ layers. 
As in the previous step, the graph $\overline{G}^{i}$ is
regular and $U_{i}$ is connected for all $i = n-k+1, \dots ,n$. 

\section{The main theorem}
We are now ready for our final calculation. We prove:

\begin{theorem} \label{thm:main}
The polytope $\Delta_n$ has at least $n^{(2-o(1))\cdot n^2}$ vertices.
\end{theorem}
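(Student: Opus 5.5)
We count the arrays $H\in\cal H$ produced by the five-stage construction. By \cref{lemma:who_is_vertex}, each such $H$ is a vertex of $\Delta_n$, because $U_H$ is connected (by the gluing layer $k+1$ and the inductive argument in Stages 4--5) and non-bipartite (by layer $k+2$). Distinct choices along the way give distinct arrays $H$, since the layers are recorded explicitly. It therefore suffices to lower bound the number of choices, stage by stage, and multiply. Stages 1--3 contribute at most $n^{O(kn)}=n^{O(n^2)}$ choices in total, which might seem comparable to the target. The intended bulk, however, comes from Stages 4 and 5. There we choose, per layer, a perfect matching in a regular bipartite graph of degree about $n-2k-j$ in $G$ and $2k$ in $\overline G$. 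We only need a lower bound, so we may simply use the counts from Stages 4 and 5 and pick any valid choices for the earlier stages.

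For Stage 4, layer $k+2+j$ (for $j=1,\dots,n-2k-2$) chooses a perfect matching $M$ in the $(n-2k-1-j)$-regular graph $G$ and a perfect matching $\overline M$ in the $2k$-regular graph $\overline G$. By the lower bound of \cref{claim:bound_d-regular} (van der Waerden), the number of choices is at least
\[
\Big(\frac{n-2k-1-j}{e}\Big)^n\Big(\frac{2k}{e}\Big)^n .
\]
For Stage 5, layer $n-k+j$ chooses $\overline M_1$ in a $(2k-2j+2)$-regular graph and then $\overline M_2$ in a $(2k-2j+1)$-regular graph, giving at least $\big(\frac{(2k-2j+2)(2k-2j+1)}{e^2}\big)^n$ choices. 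Each ordered choice determines a distinct layer; the possible double count from swapping $\overline M_1,\overline M_2$ costs only a factor $2^{n}$ per layer, which is negligible.

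Multiplying the bounds, the $G$-matchings contribute $\prod_{m=1}^{n-2k-2}(m/e)^n=((n-2k-2)!/e^{n-2k-2})^n=n^{(1-O(k/n))n^2}\,e^{-O(n^2)}$. The $\overline G$-matchings contribute $(2k/e)^{n(n-2k-2)}\cdot((2k)!/e^{2k})^n$, which is $k^{(1-O(k/n))n^2}e^{-O(n^2)}$. With $k=\Theta(n)$ this is $n^{(1-O(k/n))n^2-O(n^2/\log n)}$. The total exponent is then $(2-O(k/n))n^2$. Here lies the main obstacle: with $k\approx n/10$ fixed as in the construction, we only get $n^{(2-c)n^2}$ for a constant $c$. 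To get $2-o(1)$ we need $k/n\to0$ while $k$ itself still grows, so that $(2k)^{n(n-2k)}=n^{(1-o(1))n^2}$, i.e.\ $\log k=(1-o(1))\log n$. So I will choose $k=n/\log n$. The only places where $k\approx n/10$ was used are the upper bounds on $k$: the Moon--Moser degree condition in Stage 1, the merging count, and Lemma \ref{lemma:distinct_neighbors}, which need $k\le n/10$ roughly. The lower bound $k\ge4$ in \cref{lemma:bi-chromatic} also holds. Stage 1 additionally requires $\lfloor n/2\rfloor=\sum c_i$ with $c_i\in\{4,5\}$, which forces $k\approx n/9$ to $n/10$. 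I would handle this by reserving the first stage only on a sub-block: take $P,Q$ of total side $\approx 9k$ along the diagonal, so that the diagonal of these blocks is covered, and complete the rest of the gluing layer's diagonal trivially. Alternatively, and more simply, I would note that Cuckler--Kahn counting in Stage 1 itself gives $n^{(1-o(1))n}$ Hamiltonian cycles per layer, but this only yields $k$ layers' worth.

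My first attempt is therefore to re-parametrize with $k=o(n)$, $\log k\sim\log n$, checking that Stages 2--5 and Lemmas \ref{lemma:bi-chromatic} and \ref{lemma:distinct_neighbors} use only $4\le k\le n/10$. If the partition constraint in Stage 1 cannot be relaxed, the fallback is to accept the constant $k\approx n/10$. In that case I would obtain the $o(1)$ more carefully: the Stage 1 Hamiltonian cycles chosen via Cuckler--Kahn contribute about $(n/2)^{2\cdot n/2}$ per layer, i.e.\ $n^{n}$ per layer over $k$ layers. Together with Stage 4, which gives $n^{2n}$ per layer over $n-2k$ layers, and Stage 5, which gives $k^{2n}$ per layer over $k$ layers, this yields an exponent of $n^2\big(k/n+2(1-2k/n)+2k/n\big)$. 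This falls short of $2n^2$ by about $k n^2/n$, so the fallback genuinely needs the re-parametrization. I would then recompute the final product carefully with Stirling to confirm $n^{(2-o(1))n^2}$.
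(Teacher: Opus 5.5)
Your conclusion that $k\approx n/10$ only yields $n^{(2-c)n^2}$, and so needs a re-parametrization, comes from a miscount of Stage~1. In your main line you discard Stage~1 (``pick any valid choices for the earlier stages''). Stages 4--5 alone then give only $n^{(2-2k/n-o(1))n^2}$: the $\frac12$-side contributes $(2k)^{(1-o(1))n^2}=n^{(1-o(1))n^2}$, but the $0$-side runs over only $n-2k-2$ layers. In the fallback you credit Stage~1 with $n^{n}$ per layer, which is one Hamiltonian cycle's worth. But each of the $k$ layers carries \emph{two} Hamiltonian cycles, one in $G^i_P$ and one in $G^i_Q$. Each lives on $n-O(1)$ vertices with minimum degree at least $\lfloor n/2\rfloor-2i-8\ge(\frac{3}{10}-o(1))n$. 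The Cuckler--Kahn bound gives $n^{(1-o(1))n}$ choices for each, hence $n^{(2-o(1))n}$ per layer and $n^{(2k/n-o(1))n^2}=n^{(\frac15-o(1))n^2}$ for Stage~1. Your own exponent then becomes $\frac{2k}{n}+2\bigl(1-\frac{2k}{n}\bigr)+\frac{2k}{n}=2$, which is exactly the paper's $\frac15+\frac85+\frac15$ at $k=n/10$. The point you are missing is that the $n-2$ layers of Stages 1, 4 and 5 all place two half-entries per row with $\Theta(n)$-degree freedom. The shrinking $0$-side degrees in Stage~4 are absorbed by Stirling, since $((n-2k-2)!)^n=n^{(1-o(1))n(n-2k)}$. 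And $\log(cn)=(1-o(1))\log n$ for fixed $c>0$, so a constant ratio $k/n$ costs nothing.

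Consequently the re-parametrization $k=n/\log n$, on which your plan actually rests, is unnecessary, and as sketched it does not go through. Stages 3--5 all need $G^{k+1}$ and $\overline G^{k+1}$ to be regular: every row and column of $D^{k+1}$ must have exactly one $1$-cell and $2k$ $\frac12$-cells. This is used in extending an edge to a perfect matching in Lemma~\ref{lemma:distinct_neighbors}, in the permanent bounds of Claim~\ref{claim:bound_d-regular} (used in Lemma~\ref{lemma:bi-chromatic} and in the counting), and in the simultaneous elimination of $0$-cells. In the paper this holds because every row is crossed by all $k$ Stage-1 cycles and every diagonal shaft is hit exactly once in layers $1,\dots,k$. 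Shrinking $P,Q$ to total side $\approx 9k$ leaves the other $n-9k$ rows untouched in layers $1,\dots,k$, and ``completing the rest of the diagonal trivially'' cannot restore regularity. If you did want $k=o(n)$, the natural repair is to keep $P,Q$ full-size and enlarge the minors to $c_i\approx n/(2k)$, which the Moon--Moser and merging estimates tolerate. Your remark that a Stage-5 layer $\overline M_1\cup\overline M_2$ can arise from up to $2^{n/2}$ ordered pairs is correct and harmless: it costs only $2^{O(n^2)}=n^{o(n^2)}$ overall.
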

\begin{proof}

The total number of arrays $H\in\mathcal{H}$ that our construction
produces is the product of the number of choices available at the various layers.
We split our calculation into three, paralleling the above stages.
We have set $k:\approx\frac{n}{10}$. In what follows, we
work with $k=\frac{n}{10}$, and note that any small deviation from this value
has essentially no effect on the calculations that we carry out.
We first state the estimates, with proofs appearing subsequently:
\begin{itemize}
\item In {\it stage $1$, step $2$}, concerning layers $1,\dots,k$, with two Hamiltonian cycles per layer, one in $P$ and the other in $Q$, the number of possible choices is $n^{(\frac{1}{5}-o(1))\cdot n^2}$.
\item In {\em stage $4$}, concerning layers $k+3,\dots,n-k$, with two matchings per layer, one in $G^i$ and the other in $\overline{G}^i$, the number of possible choices is $n^{(\frac{8}{5}-o(1))\cdot n^2}$.
\item In {\em stage $5$}, layers $n-k+1,\dots,n$, with two matchings per layer, both in $\overline{G}^i$, the number of possible choices is $n^{(\frac{1}{5}-o(1))\cdot n^2}$.
\end{itemize} 
These bounds yield the stated result, and we turn to provide 
proofs and explanations for each stage separately. 

\subsection{The contribution of stage $1$ (step $2$)}
For each layer $1\leq i\leq k$ we constructed, 
in stage $1$ (step $2$), a Hamiltonian cycle in the regular
bipartite graph $G^i_P$ (eq. $G^i_Q$). As shown by
Kahn and Cuckler \cite{cuckler2009hamiltonian}, a balanced
bipartite  
graph with a total of $|V(G)|$ vertices and minimal degree
$d\geq \frac{|V(G)|}{4}$ has $\Psi(G)\geq\left(\frac{d}{e+o(1)}\right)^{|V(G)|}$ Hamiltonian cycles. 
In our case, as described in \hyperref[section-2-stage-1]{step 2},
the graph $G^i_P$ has $2(\big\lfloor \frac{n}{2} \big\rfloor-c_i)$ vertices,
where $c_i \le 5$. Its vertex degrees are at least
$\big\lfloor \frac{n}{2} \big\rfloor-c_i-5-2i+2$. Therefore, in constructing layer
$1\leq i\leq k$, the number of Hamiltonian cycles
from which to choose is at least 

$$\Psi(G^i_P) \cdot \Psi(G^i_Q)\geq
\left(\left(\frac{\big\lfloor \frac{n}{2} \big\rfloor-c_i-5-2i+2}{e+o(1)}\right)^{2\left(\lfloor \frac{n}{2} \rfloor-c_i\right)} \right)^2\geq 
\left(\frac{\big\lfloor \frac{n}{2} \big\rfloor-2i-8}{e+o(1)}\right)^{4\left(\lfloor \frac{n}{2} \rfloor-5\right)}.$$
Going over all layers $i=1,\dots, k$,
the total number of choices of Hamiltonian cycles is at least 

$$\prod_{i=1}^k \left(\Psi(G^i_P) \cdot \Psi(G^i_Q)\right) 
\geq \prod_{i=1}^k \left(\frac{\big\lfloor \frac{n}{2} \big\rfloor-2i-8}{e+o(1)}\right)^{4\left(\lfloor \frac{n}{2} \rfloor-5\right)}\geq
\left(\frac{\frac{3n}{10}-8}{e+o(1)}\right)^{
\frac{n^2}{5}-2n} \ge
n^{(\frac{1}{5}-o(1))\cdot n^2}$$

\subsection{The contribution of stage $4$}
In \hyperref[stage-4]{stage 4}, for each layer $i=k+3,\dots , n-k$ we select two perfect matchings, one in $G^i$ and one in $\overline{G}^i$. 
Here $(x,y)$ is an edge in $G^i$, resp.\ $\overline{G}^i$
if $D^i(x,y)=0$, resp.\ $D^i(x,y)=\frac{1}{2}$. By applying
claim \ref{claim:bound_d-regular} to the adjacency matrices of these graphs,
we obtain the following lower bound on the count of
possible matchings in the $i$-th layer:

$$\text{per}\left(M^i\right) \geq \left( \frac{n-k+1-i}{e}\right)^n \ ;
\ \text{per}\left(\overline{M}^i\right)  \geq \left( \frac{2k}{e}\right)^n .$$
In total, this stage provides us with the following lower bound:

$$\prod_{i=k+3}^{n-k} \left(\left( \frac{n-k+1-i}{e}\right) \cdot \left( \frac{2k}{e}\right)\right)^n=
\left( \frac{(n-2k-2)!}{e^{n-2k-2}} \cdot\left( \frac{2k}{e}\right)^{n-2k-2}\right)^n.$$
By Stirling's formula, this equals
$$\left(\left(\left(1+o(1)\right)\cdot\frac{(n-2k-2)}{e^2}\right)^{n-2k-2}\cdot\left(\frac{2k}{e}\right)^{n-2k-2}\right)^n=$$
$$
\left((1+o(1))\cdot \frac{(n-2k-2)\cdot 2k}{e^3}\right)^{n\cdot (n-2k-2)}=
n^{(\frac{8}{5}-o(1))\cdot n^2}$$

\subsection{The contribution of stage $5$}
In the \hyperref[stage-5]{final stage}, we select
at each layer $i= n-k+1, \dots , n$, two disjoint perfect matchings in $\overline{G}^i$. 
Again, applying claim \ref{claim:bound_d-regular} to the adjacency matrices of the graphs $\overline{G}^i$ and $\overline{G}^i\setminus \overline{M}^i_1$ and invoking Stirling's formula yields the following bound for this stage.
Note that this process is equivalent to iteratively selecting a perfect matching, and proceeding with the residual graph, repeated $2k$ times.

$$\prod_{i=n-k+1}^{n}\left( \text{per}\left(\overline{M}^i_1\right) \cdot \text{per}\left(\overline{M}^i_2\right) \right) \geq 
\left( \frac{(2k)!}{e^{2k}}\right)^n =
\left( \left((1+o(1))\cdot\frac{2k}{e^2}\right)^{2k}\right)^n = 
n^{(\frac{1}{5}-o(1))\cdot n^2}$$

\end{proof}

\section{So, how many vertices does $\Delta_n$ have?}

This question remains presently open. We can show, however,

\begin{prop}\label[proposition]{prop:ub}
$\Delta_n$ has no more than $n^{(3+o(1))\cdot n^2}$ vertices.
\end{prop}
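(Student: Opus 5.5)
A vertex of $\Delta_n$ is determined by its support. Indeed, $\Delta_n$ is the set of $x\in\mathbb{R}^{n^3}$ with $x\ge 0$ satisfying $3n^2$ linear equations (one for each line). A point $v$ is a vertex iff it is the unique solution of the line equations together with $x_c=0$ for all cells $c\notin\text{supp}(v)$. In particular, the restriction of $v$ to its support is the unique solution of the linear system obtained by keeping only the support columns. So distinct vertices have distinct supports, and the support $S$ determines the vertex. Moreover, the columns of the $3n^2\times n^3$ constraint matrix indexed by $S$ must be linearly independent, since otherwise a nonzero kernel vector supported on $S$ would give a direction along which we could move in both directions while staying feasible. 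Hence $|S|\le \text{rank}\le 3n^2$.

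The plan is therefore to bound the number of possible supports, i.e.\ the number of subsets of the $n^3$ cells of size at most $3n^2$:
$$\sum_{s\le 3n^2}\binom{n^3}{s}\le (3n^2+1)\binom{n^3}{3n^2}\le (3n^2+1)\Big(\frac{e\,n^3}{3n^2}\Big)^{3n^2}=(3n^2+1)\Big(\frac{en}{3}\Big)^{3n^2}.$$
This equals $n^{(3+o(1))n^2}$, because $(e/3)^{3n^2}$ and the factor $3n^2+1$ contribute only $n^{o(n^2)}$.

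There is no real obstacle here. The one step needing care is the claim that a vertex's support indexes linearly independent columns, which is the standard characterization of basic feasible solutions. Since the rank of the constraint matrix is at most $3n^2$ (in fact somewhat less, as the line equations are dependent), the support size is bounded accordingly. The counting itself is routine.
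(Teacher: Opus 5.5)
Your proof is correct and follows essentially the paper's route. The paper likewise bounds the support of a vertex by roughly $3n^2$, by noting that at least $(n-1)^3$ of the $n^3$ nonnegativity constraints must be tight in the $(n-1)^3$-dimensional parametrization, so $|S|\le n^3-(n-1)^3$ (the same fact as your rank bound). It then shows a vertex is determined by its support via the same two-sided perturbation argument and counts subsets; your version additionally carries out the binomial count explicitly and correctly states the support bound as an upper bound, where the paper's text says ``at least'' but means ``at most''.
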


To prove this proposition we need to consider 
the support size of vertices in $\Delta_n$. Every 
vertex $Z$ of $\Delta_n$ clearly has support size at least $n^2$,
since every row of the array $Z$ must contain a non-zero entry. The polytope
$\Delta_n$ is $(n-1)^3$-dimensional. It
is defined by $n^3$ inequalities which are categorized as follows:
\begin{itemize}
\item
$(n-1)^3$ non-negativity inequalities one per each
entry $Z(i,j,k)$ with $1\le i, j, k \le n-1$.
\item 
$3(n-1)^2$ upper bounds on the sum of every line in $Z$.
\item 
$3(n-1)$ lower bounds on the sum of each wall or layer of $Z$.
\item 
One upper bound on $\sum_{i,j,k=1}^{n-1} Z(i,j,k)$.
\end{itemize}
In a vertex of $\Delta_n$ at least
$(n-1)^3$ inequalities must hold with equality.
So, at least 
$3(n-1)^2+3(n-1)+1$ entries of $Z$ must be positive,
yielding a lower bound on the support size of each vertex of $\Delta_n$.
The proof of \cref{prop:ub} follows from:

\begin{prop}\label[proposition]{prop:uniqueness}
Every vertex of $\Delta_n$ is uniquely determined by its support.
\end{prop}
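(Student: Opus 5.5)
The plan is to use the standard fact that a vertex of a polytope given by linear equalities and nonnegativity constraints is a basic feasible solution. Write $\Delta_n=\{Z\in\mathbb{R}^{n^3}: AZ=\mathbf{1},\ Z\ge 0\}$, where $A$ is the $3n^2\times n^3$ line-incidence matrix: each row of $A$ corresponds to a line (row, column or shaft) and records which cells lie on it. Let $Z$ be a vertex with support $S=\text{supp}(Z)$. The claim I will prove is that $Z$ is the unique solution of the linear system
\[
AW=\mathbf{1}, \qquad W(c)=0 \ \text{for all } c\notin S .
\]
Once this is shown, $Z$ is recovered from $S$ alone by solving this system, which proves \cref{prop:uniqueness}.

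For uniqueness, suppose $W\ne Z$ also solves the system, and set $V=W-Z$. Then $V\neq 0$, $AV=0$, and $\text{supp}(V)\subseteq S$. Since every entry of $Z$ on $S$ is strictly positive, there is an $\epsilon>0$ such that both $Z+\epsilon V$ and $Z-\epsilon V$ are nonnegative. Both have all line sums equal to $1$, so both lie in $\Delta_n$. But $Z=\tfrac12\big((Z+\epsilon V)+(Z-\epsilon V)\big)$ with the two arrays distinct, so $Z$ is not a vertex, a contradiction. This is the same perturbation idea as the second half of the proof of \cref{lemma:who_is_vertex}, with the $\pm$ direction $V$ in place of the $T/R$ bipartition.

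Equivalently, the columns of $A$ indexed by $S$ are linearly independent, so the restricted system has at most one solution. The vertex condition supplies existence, since $Z$ itself solves the system. No step here is a real obstacle. The only point needing care is that we use the exact equality description of $\Delta_n$ (line sums equal to one) rather than the reduced inequality description in the count preceding the statement, so that feasibility of $Z\pm\epsilon V$ is immediate.

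Although not asked for, this yields \cref{prop:ub}. Every vertex is determined by a support of size at most $n^3$. By the rank bound, the support has size at most $\text{rank}(A)=3n^2-3n+1\le 3n^2$, since the restricted columns are independent. Hence the number of vertices is at most $\sum_{s\le 3n^2}\binom{n^3}{s}\le n^{(3+o(1))n^2}$.
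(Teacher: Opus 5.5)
Your proof is correct and is essentially the paper's argument: the paper also takes the difference $F=Z-W$ for a second array $W\in\Delta_n$ with the same support, and contradicts extremality via $Z=\tfrac12\big((Z+\epsilon F)+(Z-\epsilon F)\big)$. You allow any real $W$ with all line sums one and support contained in $\text{supp}(Z)$, which slightly strengthens this to linear independence of the support columns and hence the upper bound $|\text{supp}(Z)|\le 3n^2-3n+1$ that \cref{prop:ub} actually needs.
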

\begin{proof}
Let $Z \in \Delta_n$ be a vertex of $\Delta_n$. Suppose towards contradiction
that some $W \neq Z$ belongs to $\Delta_n$, and yet 
$\text{supp}(W) = \text{supp}(Z)$. Denote $F=Z-W$. Clearly $F$ is not identically
zero and $\text{supp}(F) \subseteq \text{supp}(W) = \text{supp}(Z)$.
It follows that for a sufficiently small $\epsilon > 0$, both $(Z + \epsilon \cdot F)$ and $(Z - \epsilon\cdot F)$ are two distinct members of $\Delta_n$. But then
$Z$ is the convex combination 
$Z = \frac{1}{2}(Z + \epsilon\cdot F) + \frac{1}{2}(Z - \epsilon\cdot F)$, 
contradicting the assumption that $Z$ is a vertex.
\end{proof}

\subsection{Some numerics}
It is easy to sample vertices of $\Delta_n$, though uniform sampling still eludes us. We have conducted some preliminary simulations to this end.
We write the defining inequalities of $\Delta_n$ and use an LP-solver
to maximize a random objective function on this polytope.
This sampling of vertices of $\Delta_n$ is most likely not uniform.
Still, it is worth noting that for substantial $n$, individual lines have varying 
support sizes, though support sizes of $2$ and $3$ are
frequently observed. Also,
as $n$ increases, the non-zero values appearing in the vertices become increasingly diverse.

This leads to several questions:
\begin{itemize}
\item 
How large can the support size be in a single line of a vertex of $\Delta_n$? 
\item
Is there any hope to say anything about higher-dimensional faces of $\Delta_n$?
We recall that in contrast, the edges of the Birkhoff Polytope are well characterized.
\end{itemize}

\begin{figure}[p]
    \centering
    \hspace*{-1.3cm} 
    \begin{tabular}{cccccc}
        & \textbf{Sample 1} & \textbf{Sample 2} & \textbf{Sample 3} & \textbf{Sample 4} & \textbf{Sample 5} \\
        
        \rotatebox{90}{\textbf{$n=10$}} & 
        \includegraphics[width=0.19\textwidth]{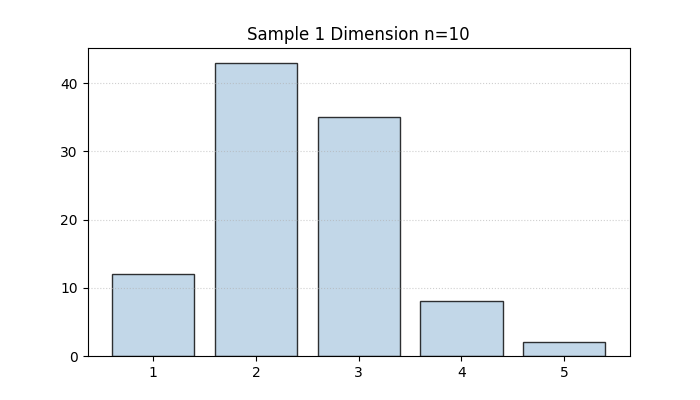} & 
        \includegraphics[width=0.19\textwidth]{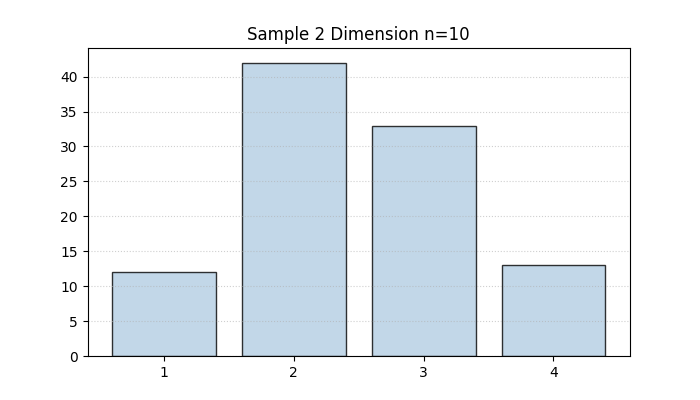} & 
        \includegraphics[width=0.19\textwidth]{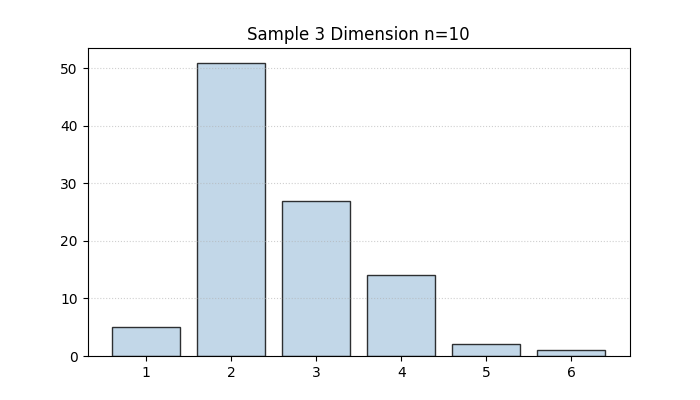} & 
        \includegraphics[width=0.19\textwidth]{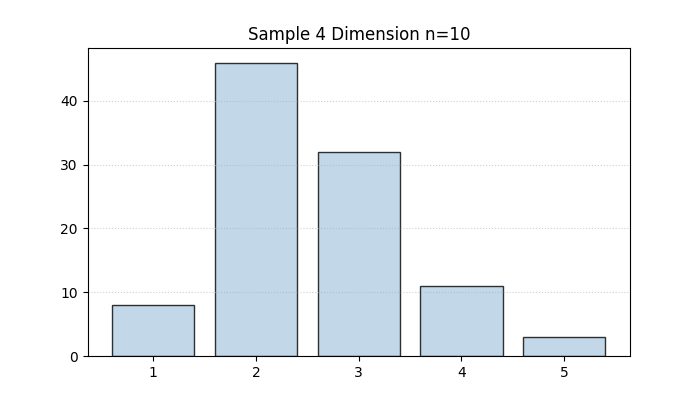} & 
        \includegraphics[width=0.19\textwidth]{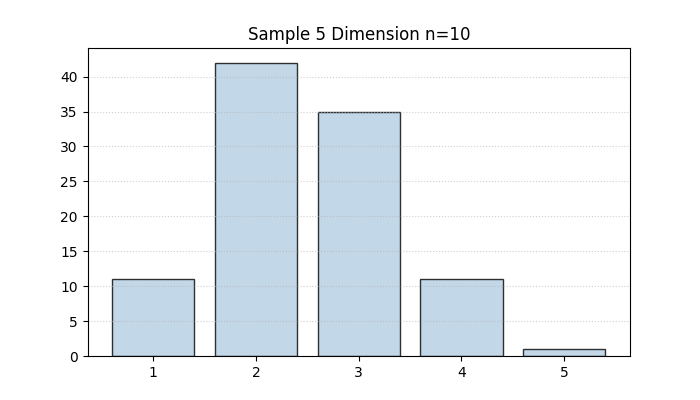} \\[1ex]

        \rotatebox{90}{\textbf{$n=20$}} & 
        \includegraphics[width=0.19\textwidth]{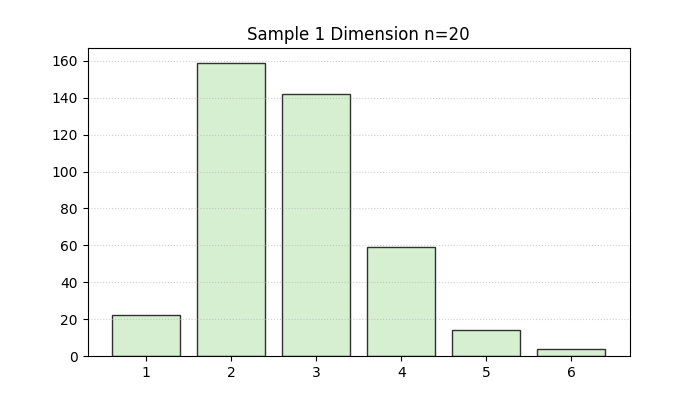} & 
        \includegraphics[width=0.19\textwidth]{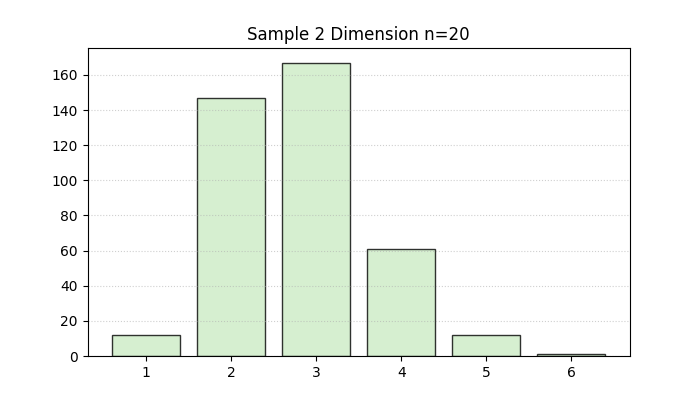} & 
        \includegraphics[width=0.19\textwidth]{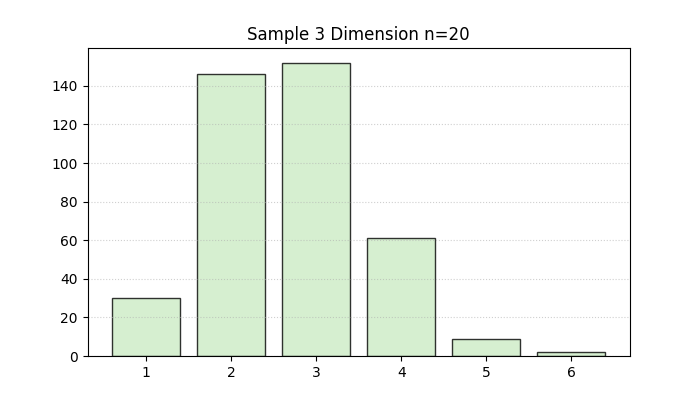} & 
        \includegraphics[width=0.19\textwidth]{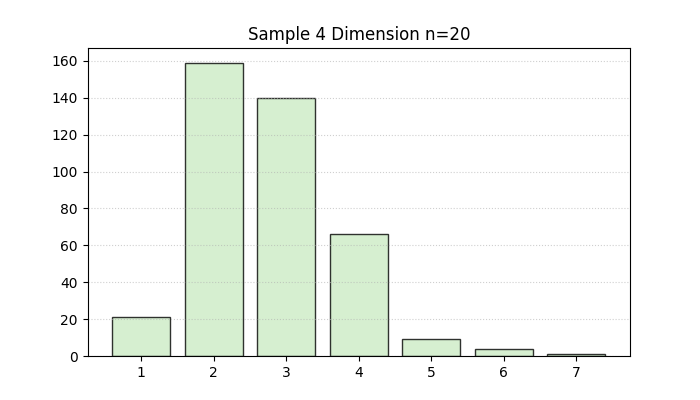} & 
        \includegraphics[width=0.19\textwidth]{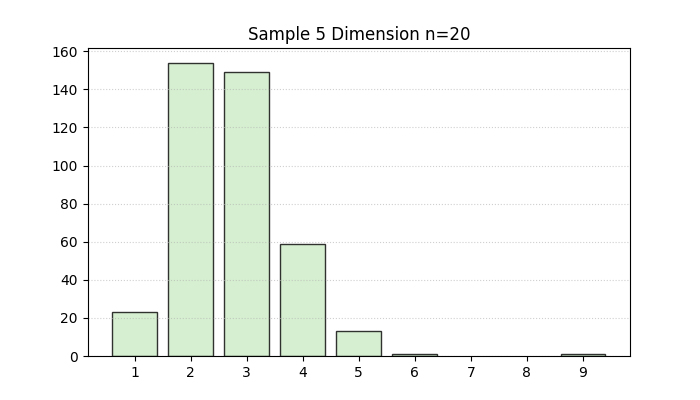} \\[1ex]

        \rotatebox{90}{\textbf{$n=30$}} & 
        \includegraphics[width=0.19\textwidth]{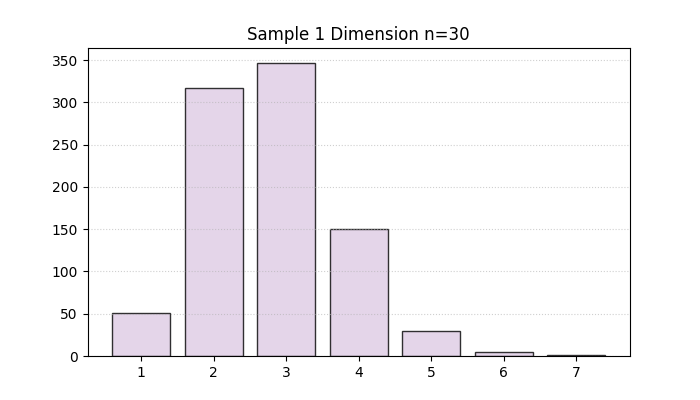} & 
        \includegraphics[width=0.19\textwidth]{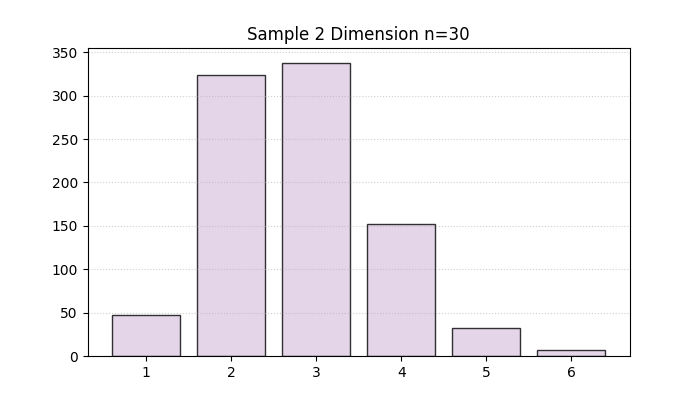} & 
        \includegraphics[width=0.19\textwidth]{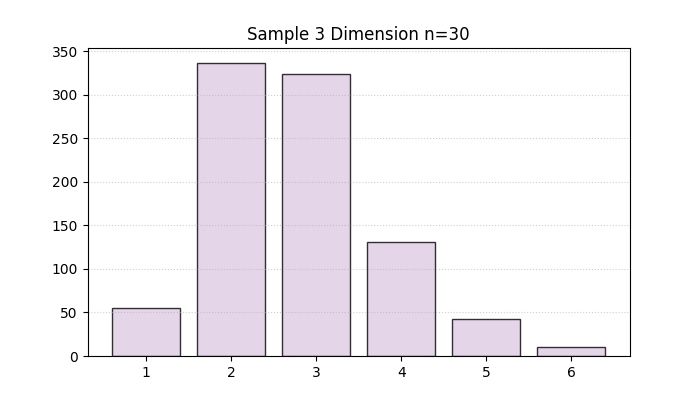} & 
        \includegraphics[width=0.19\textwidth]{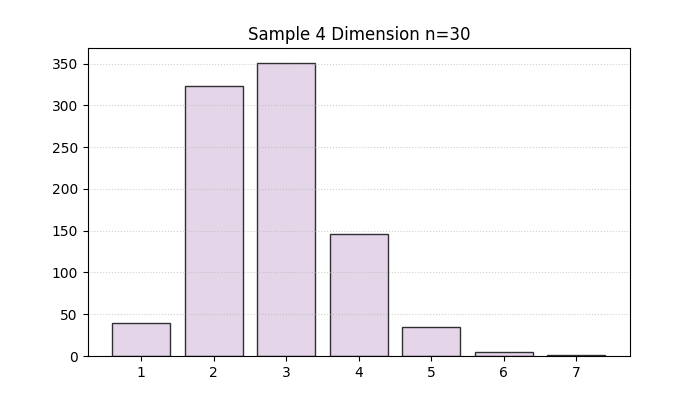} & 
        \includegraphics[width=0.19\textwidth]{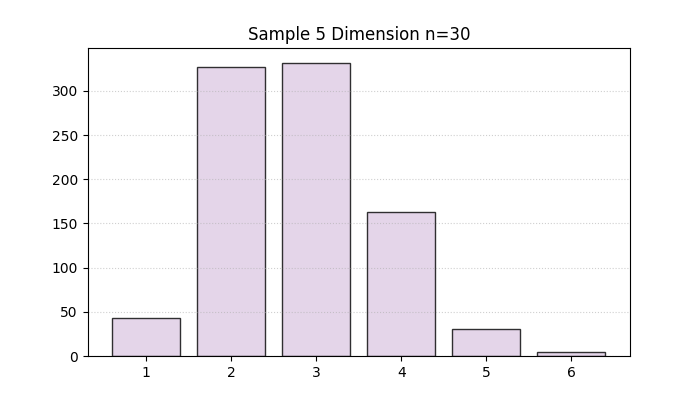} \\[1ex]

        \rotatebox{90}{\textbf{$n=40$}} & 
        \includegraphics[width=0.19\textwidth]{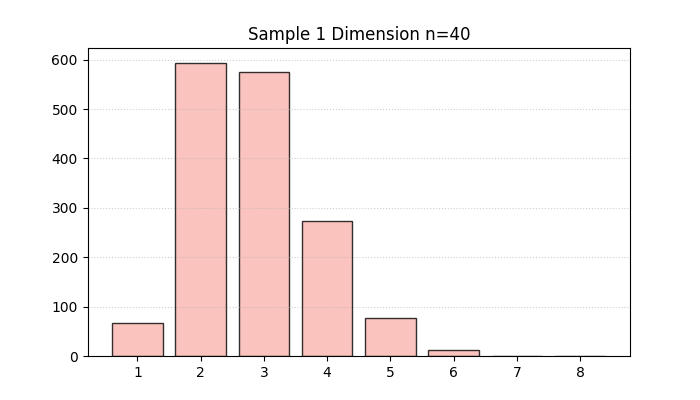} & 
        \includegraphics[width=0.19\textwidth]{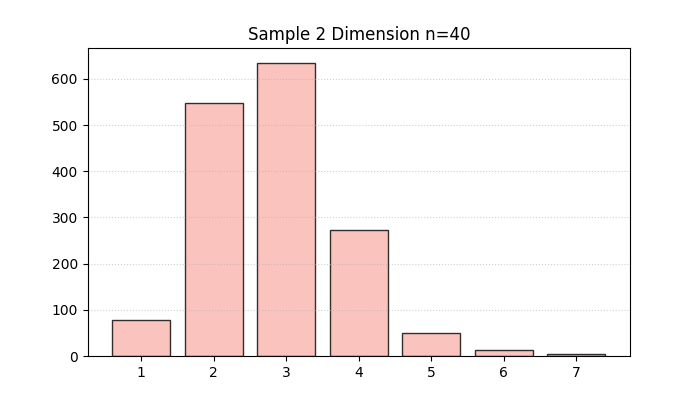} & 
        \includegraphics[width=0.19\textwidth]{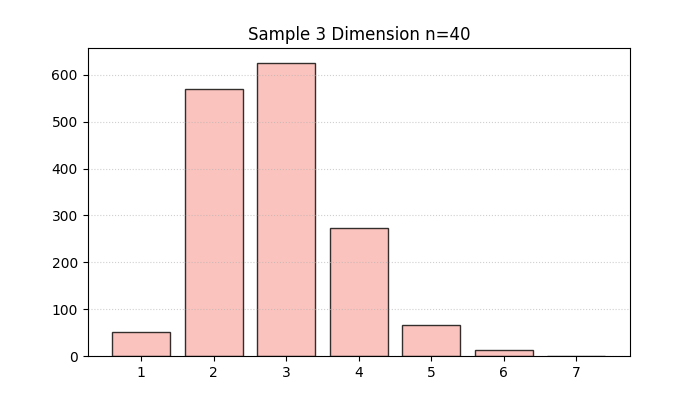} & 
        \includegraphics[width=0.19\textwidth]{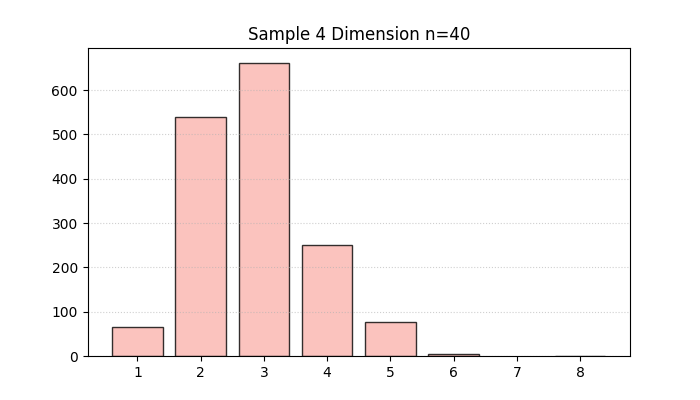} & 
        \includegraphics[width=0.19\textwidth]{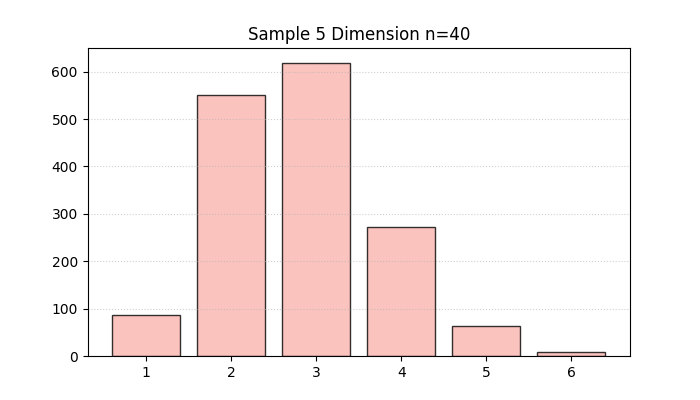} \\[1ex]

        \rotatebox{90}{\textbf{$n=50$}} & 
        \includegraphics[width=0.19\textwidth]{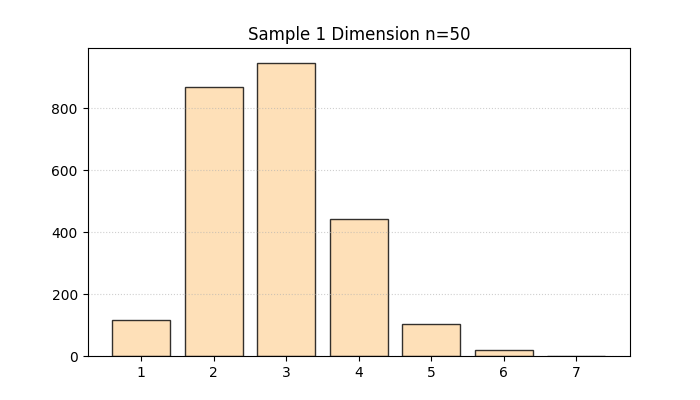} & 
        \includegraphics[width=0.19\textwidth]{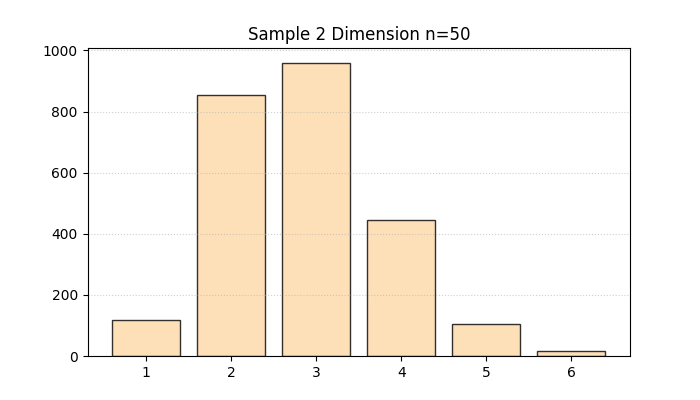} & 
        \includegraphics[width=0.19\textwidth]{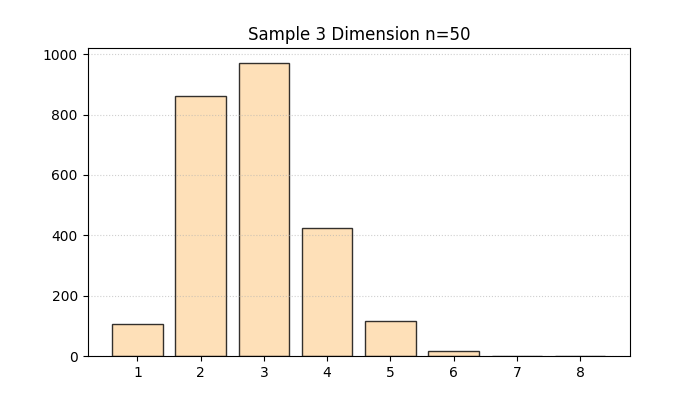} & 
        \includegraphics[width=0.19\textwidth]{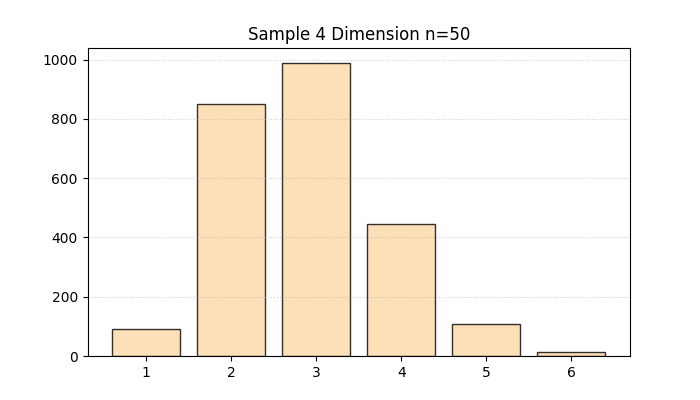} & 
        \includegraphics[width=0.19\textwidth]{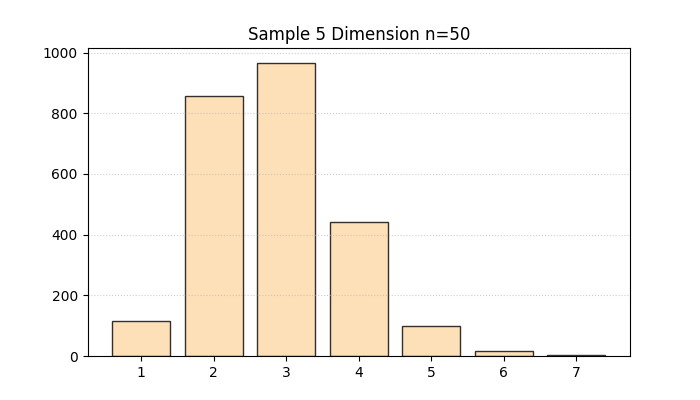} \\[1ex]

        \rotatebox{90}{\textbf{$n=60$}} & 
        \includegraphics[width=0.19\textwidth]{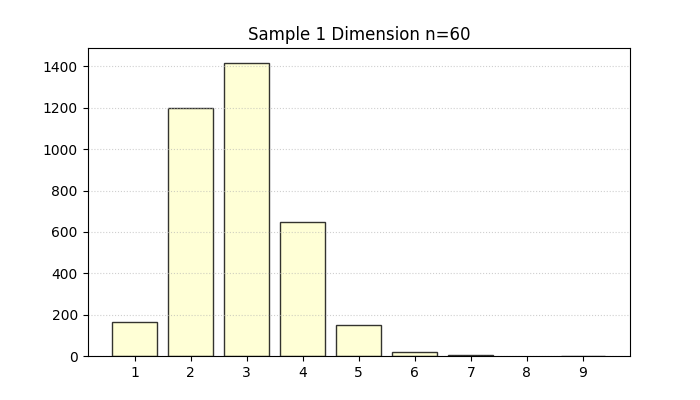} & 
        \includegraphics[width=0.19\textwidth]{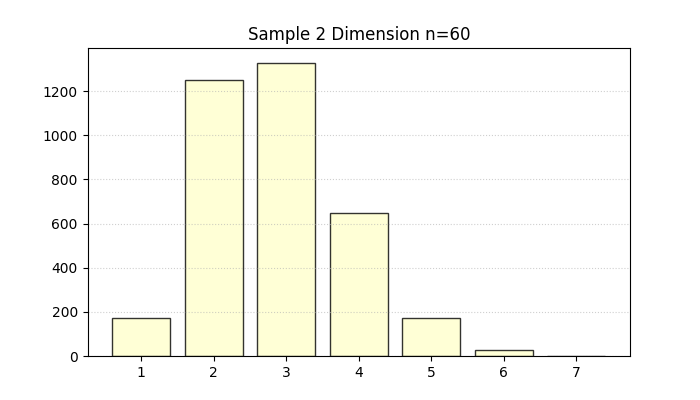} & 
        \includegraphics[width=0.19\textwidth]{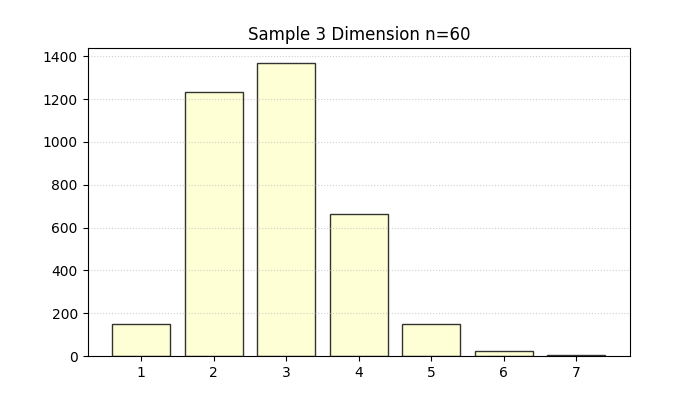} & 
        \includegraphics[width=0.19\textwidth]{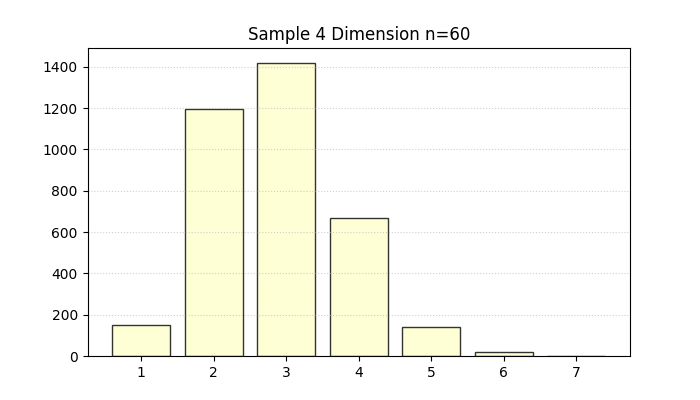} & 
        \includegraphics[width=0.19\textwidth]{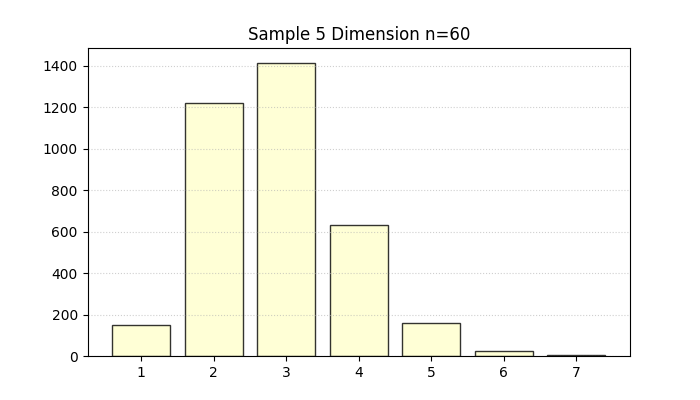} \\
    \end{tabular}
    \caption{Histograms of row support sizes for randomly sampled tri-stochastic vertices across dimensions $n=10$ through $n=60$.}
    \label{fig:expanded_histogram_grid}
\end{figure}

\clearpage
\bibliographystyle{plain}
\bibliography{references}

@article{moon1963hamiltonian,
  title={{On Hamiltonian bipartite graphs}},
  author={Moon, John and Moser, Leo},
  journal={Israel Journal of Mathematics},
  volume={1},
  number={3},
  pages={163--165},
  year={1963},
  publisher={Springer-Verlag New York}
}

@article{dirac1952some,
  title={{Some theorems on abstract graphs}},
  author={Dirac, Gabriel Andrew},
  journal={Proceedings of the London Mathematical Society},
  volume={3},
  number={1},
  pages={69--81},
  year={1952},
  publisher={Oxford University Press}
}

@article{bregman1973some,
  title={{Some properties of nonnegative matrices and their permanents}},
  journal={Soviet Mathematics Doklady},
  author={Br{\`e}gman, Lev Meerovich},
  volume={14},
  pages={945--949},
  year={1973}
}

@article{minc1963upper,
  title={{Upper bounds for permanents of (0,1)-matrices}},
  journal={Bulletin of the American Mathematical Society},
  author={Minc, Henryk},
  volume={69},
  pages={789--791},
  year={1963}
}

@article{hummel1940note,
  title={{A note on Stirling's formula}},
  author={Hummel, P.M.},
  journal={The American Mathematical Monthly},
  volume={47},
  number={2},
  pages={97--99},
  year={1940},
  publisher={JSTOR}
}

@article{egorychev1981solution,
  title={{The solution of Van der Waerden's problem for permanents}},
  author={Egorychev, Gregory P.},
  journal={Advances in Mathematics},
  volume={42},
  number={3},
  pages={299--305},
  year={1981},
  publisher={Academic Press}
}

@article{falikman1981proof,
  title={{Proof of the Van der Waerden conjecture regarding the permanent of a doubly stochastic matrix}},
  author={Falikman, Dmitry I.},
  journal={Mathematical notes of the Academy of Sciences of the USSR},
  volume={29},
  number={6},
  pages={475--479},
  year={1981},
  publisher={Springer}
}

@article{cuckler2009hamiltonian,
  title={{Hamiltonian cycles in Dirac graphs}},
  author={Cuckler, Bill and Kahn, Jeff},
  journal={Combinatorica},
  volume={29},
  number={3},
  pages={299--326},
  year={2009},
  publisher={Springer}
}

@article{birkhoff1946tres,
  title={{Tres observaciones sobre el algebra lineal}},
  author={Birkhoff, Garrett},
  journal={Univ. Nac. Tucuman, Ser. A},
  volume={5},
  pages={147--154},
  year={1946}
}

@article{linial2014vertices,
  title={{On the vertices of the d-dimensional Birkhoff polytope}},
  author={Linial, Nathan and Luria, Zur},
  journal={Discrete \& Computational Geometry},
  volume={51},
  number={1},
  pages={161--170},
  year={2014},
  publisher={Springer}
}

@article{luria_count_hd_prmt,
  author       = {Nathan Linial and
                  Zur Luria},
  title        = {{An upper bound on the number of high-dimensional permutations}},
  journal      = {Comb.},
  volume       = {34},
  number       = {4},
  pages        = {471--486},
  year         = {2014},
  url          = {https://doi.org/10.1007/s00493-011-2842-8},
  doi          = {10.1007/S00493-011-2842-8},
  bibsource    = {dblp computer science bibliography, https://dblp.org}
}

@book{van_lint_wilson,
  title={{A course in combinatorics}},
  author={Van Lint, Jacobus Hendricus and Wilson, Richard Michael},
  year={2001},
  publisher={Cambridge university press}
}
\end{document}